\newtheorem{theorem}{Theorem}[section]
\newtheorem{lemma}[theorem]{Lemma}
\newtheorem{corollary}[theorem]{Corollary}
\newtheorem{proposition}[theorem]{Proposition}
\newtheorem{observation}[theorem]{Observation}
\theoremstyle{definition}
\newtheorem{remark}[theorem]{Remark}    %* = no numbering
\newtheorem{example}[theorem]{Example}
\newcommand{\hil}{{\mathbb H}}
\newcommand{\R}{{\mathbb R}}
\newcommand{\C}{{\mathbb C}}
\newcommand{\Rho}{{\rm P}}
\newcommand{\msr}{{\rm msr}}
\newcommand{\rank}{{\rm rank}}
\newcommand{\CF}{{\mathcal F}}
\newcommand{\SL}{{\mathscr L}}
\newcommand{\CH}{{\mathcal H}}
\newcommand{\bmat}{\left[ \ \begin{matrix}}
\newcommand{\emat}{\end{matrix} \ \right]}
\newcommand{\bpar}{\left\{ \ \begin{matrix}}
\newcommand{\epar}{\end{matrix} \ \right\}}
\newcommand{\eparr}{\end{matrix} \ \right.}
\newcommand{\cp}{\, \Box\,}
\newlength{\mysmnodesz}
\newlength{\mynmnodesz}
\newcommand{\spanscale}{1}
\begin{document} 

\title{Tight Frame Graphs Arising as Line Graphs}
\author[V. Furst]{Veronika~Furst}
\address{Veronika Furst, Department of Mathematics, Fort Lewis College, Durango, CO  81301, USA}
\email{furst\_v@fortlewis.edu}
\author[H. Grotts]{Howard~Grotts}
\address{Howard Grotts, Department of Mathematics, Fort Lewis College, Durango, CO  81301, USA}
\email{hbgrotts@live.com}

\begin{abstract}
Dual multiplicity graphs are those simple, undirected graphs that have a weighted Hermitian adjacency matrix with only two distinct eigenvalues.  From the point of view of frame theory, their characterization can be restated as which graphs have a representation by a tight frame.  In this paper, we classify certain line graphs that are tight frame graphs and improve a previous result on the embedding of frame graphs in tight frame graphs.
\end{abstract} 

\maketitle

%%%%%%%%%%%%%%%%%%%%%%%%%%%%%%%%%%%%%%%%%%%%%%%%%%%%5
\section{Introduction}

Much recent interest has concerned the inverse eigenvalue problem of a graph, namely, to determine all possible spectra of real symmetric (Hermitian) matrices whose off-diagonal pattern of zero/non-zero entries is given by the adjacencies of a graph (see \cite{BBF20} and the references therein).  The special case of dual multiplicity graphs, or graphs that permit two distinct eigenvalues, has been thoroughly investigated (see \cite{AFM19}, \cite{AAC13},  \cite{BHP18}, and \cite{CGM14}).  

In \cite{AN18} this problem was reintroduced from the point of view of frame theory.  Frames in finite-dimensional spaces have received much attention from both pure and applied mathematicians and constitute a vast literature; see \cite{CKP13} for a thorough introduction or \cite{HKL07} for a development at the undergraduate level.  In finite-dimensional spaces, a finite frame is a sequence of vectors whose span is the whole space; the redundancy of frames is their key advantage over orthonormal bases in applications such as image processing or data transmission.  Tight frames \cite {W18} are especially important since they provide (non-unique) reconstruction of vectors similarly to orthonormal bases, without the requirement of orthogonality (or linear independence).  
Every simple graph is a frame graph, that is, has a vector representation by a frame $\{f_1, \ldots, f_n\}$ with $\langle f_i, f_j \rangle \neq 0$ if and only if the vertices represented by $f_i$ and $f_j$ are connected by an edge.  In the real case, this is also known as a faithful orthogonal representation, dating back to \cite{L79}.  Dual multiplicity graphs are those which have a representation by a tight frame, and classifying them is a difficult question.  

In this paper, we continue the line of investigation of \cite{AN18}, applying frame theoretic tools to a graph theoretic question.  In the next section, we provide definitions and preliminary material from both graph theory and frame theory, concluding with some key connections.  In Section 3, we focus on tight frames and establish an improved result on frame graphs being embedded as induced subgraphs of tight frame graphs.  Although a straightforward consequence of Proposition 2.1 of \cite{CFM13}, Corollary \ref{min} yields embeddings that are minimal in terms of the number of additional vectors required; we contrast our construction with the non-minimal embedding of Theorem 4.2 of \cite{CGM14}.  Section 4 contains our main contribution, the natural recognition of frame graphs as line graphs and the classification of certain line graphs as tight frame graphs.  Using what we refer to as the ``Laplacian method," we establish the line graph of the complete graph as a tight frame graph (Theorem \ref{LKn}) and give a new, constructive proof of the tightness of the complete graph (Theorem \ref{Sn1Kn}).  We illustrate relationships between line graphs and root graphs in terms of whether or not they are tight frame graphs and end by pointing out some limitations of this approach.

%%%%%%%%%%%%%%%%%%%%%%%%%%%%%%%%%%%%%%%%%%%%%%%
\section{Background}

A \textit{graph} $\Gamma=(V,E)$ is an ordered pair, where $V=V(\Gamma)=\{v_1,\hdots, v_n \}$ is a non-empty set of {\em vertices} and $E=E(\Gamma)$ is a set of unordered pairs of distinct vertices, called {\em edges} (or {\em lines}) connecting vertices. Here we consider only undirected simple graphs, without loops, or edges connecting a vertex to itself, and without multiple edges between a pair of vertices. We say two vertices $v_i, v_j$ are \textit{adjacent} (or are {\em neighbors}) if $\{v_i, v_j \} \in E$, and two edges are \textit{incident} if they share an endpoint.  The \textit{degree} of a vertex $v$, denoted $\deg(v)$, is the number of vertices adjacent to $v$.  The {\it order} of a graph is $|V|=n$ and its {\it size} is $|E|$. 

A \textit{trail} is a sequence of distinct, incident edges connecting a sequence of vertices. If the vertices in a trail are distinct, the associated graph is a \textit{path}, and if a trail begins and ends at the same vertex but all other vertices are distinct, the graph is a \textit{cycle}. The path and cycle on $n$ vertices are written $P_n$ and $C_n$, respectively. An \textit{induced subgraph} is a subset of vertices of a graph and all edges whose endpoints are both in that subset. If a graph has at least one induced path connecting any two vertices, the graph is {\it connected}. If the deletion of an edge from such a graph results in a disconnected graph, that edge is known as a {\it bridge}.
\begin{figure}[h]
\centering
    \begin{tikzpicture}
    \begin{scope}[every node/.style={circle,draw,inner sep = 0pt, minimum size=4.5mm}]
        \node (v1) at (1,1.3) {\scriptsize{$v_3$}};
        \node (v2) at (0,.65) {\scriptsize{$v_1$}};
        \node (v3) at (2,.65) {\scriptsize{$v_2$}};
        \node (v4) at (1,0) {\scriptsize{$v_4$}};
    \end{scope}
    \path (v1) edge[left] node {} (v2);
    \path (v1) edge[right] node {} (v3);
    \path (v1) edge[right] node {} (v4);
    \path (v2) edge[below left] node {} (v4);
    \path (v3) edge[below right] node {} (v4);
    \end{tikzpicture}
\qquad
\qquad
    \begin{tikzpicture}
    \begin{scope}[every/.style={circle,draw, minimum size=\mynmnodesz}]
        \node[fill=black,every] (v1) at (1,1.3) {};
        \node[fill=black,every] (v2) at (0,.65) {};
        \node[fill=black,every] (v3) at (2,.65) {};
        \node[every] (v4) at (1,0) {};
       
    \end{scope}
    \path (v1) edge[line width=1.5pt] node {} (v2);
    \path (v1) edge[line width=1.5pt] node {} (v3);
    \path (v1) edge node {} (v4);
    \path (v2) edge node {} (v4);
    \path (v3) edge node {} (v4);
    \end{tikzpicture}
\qquad
\qquad
    \begin{tikzpicture}
    \begin{scope}[every node/.style={circle,draw, minimum size=\mynmnodesz}]
        \node[fill=black] (v1) at (1,1.3) {};
        \node[fill=black] (v2) at (0,.65) {};
        \node (v3) at (2,.65) {};
        \node[fill=black] (v4) at (1,0) {};
    \end{scope}
    \path (v1) edge[line width=1.5pt] node {} (v2);
    \path (v1) edge node {} (v3);
    \path (v1) edge[line width=1.5pt] node {} (v4);
    \path (v2) edge[line width=1.5pt] node {} (v4);
    \path (v3) edge node {} (v4);
    \end{tikzpicture}
\qquad
    \caption{The diamond graph is a connected, bridgeless graph of order $4$ and size $5$; $P_3$ and $C_3$ as induced subgraphs}
    \label{fig1}
  \end{figure}
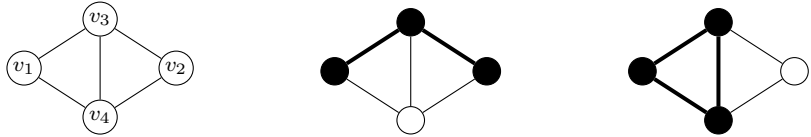
  
The \textit{complete graph} $K_n$ on $n$ vertices has $\{v_i,v_j\}\in E$ for all $i \neq j$. Equivalently, the degree of every vertex of $K_n$ is $n-1$. The \textit{star graph} $S_n$ on $n$ vertices has one vertex of degree $n-1$ and $n-1$ vertices of degree one.  The \textit{complete bipartite graph} $K_{m,n}$ has vertex set $V = V_1\cup V_2$ ($|V_1|=m, |V_2|=n$) and edge set $E = \{ \{v_1, v_2\}: v_1 \in V_1 \mbox{ and } v_2 \in V_2\}$.  Note that $S_n$ is the same graph as $K_{1, n-1}$.  The {\em Cartesian product} of graphs $\Gamma$ and $\Lambda$, denoted $\Gamma \cp \Lambda$, is the graph with vertex set $V(\Gamma) \times V(\Lambda)$, such that vertices $(u,u')$ and $(v,v')$ are adjacent in $\Gamma \cp \Lambda$ if and only if either $u=v$ and $\{u',v'\} \in E(\Lambda)$ or $u'=v'$ and $\{u,v\} \in E(\Gamma)$.

Both graphs and frames determine associated matrices, the entries of which may be over the real or complex field.  In the case of frames, we consider the finite-dimensional Hilbert space $\hil^d$, equipped with the standard inner product $\langle \ \cdot \ , \ \cdot \ \rangle$, where $\hil = \C$ or $\R$. In what follows, we denote by $I_n$, $0_n$, and $J_n$ the $n\times n$ identity matrix, zero matrix, and all-ones matrix, respectively.

For a graph $\Gamma$ on $n$ vertices, the $n \times n$ Laplacian matrix $L$ of $\Gamma$ is defined by
\[
L_{ij} = \bpar
 \deg(v_i)&& \textrm{ if } i=j\\
-1&& \textrm{ if } \{v_i,v_j\}\in E \\
0&&  \textrm{ otherwise.}
\eparr
\]
The Laplacian can also be written as $L(\Gamma)=D(\Gamma)-A(\Gamma)$, where $D(\Gamma)$ is the diagonal matrix with the degree of each vertex of $\Gamma$ along the diagonal and $A(\Gamma)$ is the \textit{adjacency} matrix with zeros along the diagonal and off-diagonal zero/one pattern corresponding to adjacencies between vertices.  More generally, to a graph $\Gamma$ we associate the following subset of $n \times n$ Hermitian (or real symmetric) matrices:
\begin{equation*}
    \CH(\Gamma) = \{ M \in \hil^{n\times n}: M=M^*, \ m_{ij} \neq 0 \iff \{v_i, v_j\} \in E(\Gamma) \text{ for }  i\neq j \}
\end{equation*}
where $^*$ denotes the conjugate transpose. All matrices in $\CH(\Gamma)$ must have real diagonal entries, that are otherwise unrestricted, and real eigenvalues. The Spectral Theorem highlights the crucial connection between a Hermitian matrix and its eigenvalues and an orthonormal basis of eigenvectors:

\begin{theorem}\label{spectral}
A Hermitian matrix $M$ may be diagonalized as $M=UDU^*$, where the eigenvalues of $M$ are the entries of the real diagonal matrix $D$ and $U$ is a unitary matrix whose columns are a complete set of corresponding unit eigenvectors of $M$.
\end{theorem}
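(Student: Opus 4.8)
The plan is to prove the statement by induction on the order $n$ of the Hermitian matrix $M \in \hil^{n \times n}$. The base case $n = 1$ is immediate: $M = [m_{11}]$ with $m_{11} \in \R$, so we may take $D = M$ and $U = [1]$. For the inductive step I would first produce a single real eigenvalue together with a unit eigenvector. The characteristic polynomial $p(\lambda) = \det(\lambda I_n - M)$ has degree $n \geq 1$, so it has a root $\lambda_1 \in \C$; if $x \neq 0$ satisfies $Mx = \lambda_1 x$, then $\lambda_1 \langle x, x \rangle = \langle Mx, x \rangle = \langle x, M^* x \rangle = \langle x, Mx \rangle = \overline{\lambda_1} \langle x, x \rangle$, and since $\langle x, x \rangle > 0$ this forces $\lambda_1 = \overline{\lambda_1} \in \R$. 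Rescaling, we obtain a unit eigenvector $u_1$.

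Next I would deflate. Using Gram--Schmidt, extend $\{u_1\}$ to an orthonormal basis of $\hil^n$ and let $U_1$ be the unitary matrix having $u_1$ as its first column. Then $U_1^* M U_1$ is again Hermitian; its first column is $U_1^*(M u_1) = U_1^*(\lambda_1 u_1) = \lambda_1 e_1$, and taking conjugate transposes, its first row is $\lambda_1 e_1^{\,*}$. Hence
\[
U_1^* M U_1 = \bmat \lambda_1 & 0 \\ 0 & M' \emat,
\]
where $M'$ is an $(n-1) \times (n-1)$ Hermitian matrix. By the inductive hypothesis, $M' = V D' V^*$ with $V$ unitary and $D'$ real diagonal. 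Setting
\[
U = U_1 \bmat 1 & 0 \\ 0 & V \emat, \qquad D = \bmat \lambda_1 & 0 \\ 0 & D' \emat,
\]
the matrix $U$ is unitary (a product of unitaries), $D$ is real diagonal, and a direct computation gives $M = U D U^*$. Finally, from $MU = UD$ one reads off that the columns of $U$ are unit eigenvectors of $M$ whose eigenvalues are the diagonal entries of $D$, and these columns are orthonormal, hence a complete set, precisely because $U$ is unitary.

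I do not expect a genuine obstacle, as this is a classical fact; the step that rewards a little care is the deflation, where one must verify that conjugating by $U_1$ yields a genuinely block-\emph{diagonal} matrix with a smaller Hermitian block rather than merely a block-triangular one --- this is exactly the place where the hypothesis $M = M^*$ (as opposed to a general Schur triangularization) is used. One could alternatively just invoke a standard linear algebra reference, but the inductive argument above is short enough to record in full.
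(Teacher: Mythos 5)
Your proof is correct and complete: the standard induction-with-deflation argument, including the key observation that Hermitian symmetry turns the block-triangular form $U_1^*MU_1$ into a genuinely block-diagonal one. Note, however, that the paper does not prove this statement at all --- Theorem \ref{spectral} is the classical Spectral Theorem, quoted as background material --- so there is no proof in the paper to compare against; your argument is simply the textbook one, recorded in full, and it is sound.
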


The set of \textit{positive semidefinite} Hermitian matrices associated with a graph $\Gamma$ is defined by
\begin{equation*}
  \CH^+(\Gamma) = \{M\in \CH(\Gamma): \langle Mx, x \rangle \geq 0\textrm{ for all } x \in \hil^n \}.
\end{equation*}
Clearly any $M\in \CH^+(\Gamma)$ has nonnegative eigenvalues.  The {\it minimum semidefinite rank} of a graph is
\begin{equation*}
    \msr(\Gamma)=\min\{\rank(M):M\in \CH^+(\Gamma) \}.
\end{equation*}
Although the minimum semidefinite rank is not field-independent \cite{BBF10}, the named graphs that appear in this paper have equal minimum semidefinite rank over $\C$ and $\R$.

The Laplacian matrix of a graph of order $n$ is positive semidefinite with smallest eigenvalue $0$; note that the all-ones vector ${\bf 1}$ is an eigenvector of $L$ corresponding to $\lambda=0$.  The rank of $L$ is $n-k$, where $k$ is the number of connected components of the graph \cite{C97}.  Suppose $\Gamma$ is a connected graph on $n$ vertices.  Then $\rank(L(\Gamma))=n-1$ and $\msr(\Gamma)\leq n-1$.  Now suppose $\msr(\Gamma)=1$. Then there exists $M\in \CH^+(\Gamma)$ with only one nonzero eigenvalue $\lambda$, and  $M=u\lambda u^*$, where $u$ is its corresponding unit eigenvector. If any $u_{k}=0$, both the $k$th row and $k$th column of $M$ contain only zeros, which contradicts the assumption that $\Gamma$ is connected. Therefore, any connected graph $\Gamma$ with $\msr(\Gamma) = 1$ must be the complete graph $K_n$, and for $\Gamma \neq K_n$, $2\leq \msr(\Gamma) \leq n-1$.
In the remainder of this paper, we assume $\Gamma$ is connected since a matrix associated with a disconnected graph can be written as the direct sum of matrices for each component.

Denote by $q(\Gamma)$ the minimum number of distinct eigenvalues for any $M \in \CH(\Gamma)$. Clearly $1 \leq q(\Gamma) \leq n $. Now if $q(\Gamma)=1$, then $M = UDU^* = \lambda I_n$ for some $M \in \CH(\Gamma)$, so $\Gamma$ is the \textit{edgeless}, or totally disconnected, graph on $n$ vertices. Therefore, connected graphs must have $q(\Gamma) \geq 2$. Of particular interest are {\it dual multiplicity} graphs where $q(\Gamma)=2$. As a consequence of the last paragraph, $K_n$ is a dual multiplicity graph.

We now turn our attention to the frame representation of a graph.  A sequence of vectors $\CF=\{ f_i \} _{i=1}^n $ is a {\it finite frame} for a $d$-dimensional Hilbert space $ \hil^d $ if there exist constants 
$ 0 < A \leq B < \infty $ such that
\begin{equation} \label{framedef}
A\| x \|^2 \leq \sum_{i=1}^n |\langle x , f_i \rangle |^2 \leq B \|x \|^2 \qquad\mbox{ for all } x\in \hil^d.
\end{equation}
 The two constants $A$ and $B$ are called {\em frame bounds}.  The largest lower frame bound and smallest upper frame bound are called the {\em optimal} frame bounds.  If $A = B$, the frame is {\em tight} (or $B$-tight), and when $A=B=1$, the frame is a
 \textit{Parseval frame}.  Inequality (\ref{framedef}) is equivalent to $\mbox{span} \{ f_i \} _{i=1}^n = \hil^d$ (although this is not valid in infinite dimensions). The \textit{synthesis operator} $F:\hil^n \rightarrow \hil^d$ is given by
 $F(e_i)=f_i$, where $\{e_i\}_{i=1}^n$ is the canonical orthonormal basis for $\hil^n$. Its adjoint, the \textit{analysis operator} $F^*: \hil^d
 \rightarrow \hil^n$ is given by $F^*(x) = (\langle x, f_i \rangle)_{i=1}^n$. In what follows, we use the matrix representations of these operators, 
\[ F = \bmat | & & | \\ f_{1} & \cdots & f_{n}\\ | & & | \emat
\qquad \qquad \mbox{and} \qquad \qquad
F^*= 
\bmat
\textrm{---}\!\!\! &  f_1^* & \!\!\!\textrm{---} \\
 & \vdots &  \\
\textrm{---}\!\!\! & f_n^* & \!\!\!\textrm{---} \\
\emat
\]
for the $d\times n$ synthesis matrix and $n\times d$ analysis matrix, respectively.  The {\em frame operator} 
$S = FF^*:\hil^d \rightarrow\mathcal\hil^d $ is defined by $$FF^*(x)=\sum_{i=1}^n\langle x, f_i \rangle f_i = \sum_{i=1}^n f_if_i^*x,$$ and the
{\em Gram operator} (also {\em Gramian} or {\em Gram matrix}) $G = F^*F: \hil^n \rightarrow \hil^n $ is given by $g_{ij}=\langle f_j, f_i \rangle$ for $1\leq i,j \leq n$.  Note that $A\|x\|^2 \leq \langle Sx, x \rangle \leq B\|x\|^2$ for all $x\in \hil^d$, and $\CF$ is a Parseval frame if and only if $S=I_d$.  The matrix $S = FF^*$ is invertible, positive definite, and Hermitian.  Its eigenvalues are the nonzero eigenvalues of the positive semidefinite, Hermitian matrix $G = F^*F$.  The following result is well-known.

\begin{lemma} \label{gram}
The frame $\CF$ is a Parseval frame if and only if $G^2 = G$.  That is, a frame is Parseval if and only if its Gram matrix is an orthogonal projection.  
\end{lemma}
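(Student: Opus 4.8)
The plan is to work directly with the operator identities $S = FF^*$ and $G = F^*F$, using two facts already recorded above: first, that $\CF$ is a Parseval frame if and only if $S = I_d$; and second, that because $\CF$ is a frame we have $\mathrm{span}\{f_i\}_{i=1}^n = \hil^d$, so the synthesis map $F$ is surjective and hence $S = FF^*$ is invertible (in fact positive definite). Note also that $G = F^*F$ is automatically Hermitian, so ``$G$ is an orthogonal projection'' is equivalent to the single condition $G^2 = G$; thus it suffices to prove the equivalence $S = I_d \iff G^2 = G$.

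For the forward direction, assume $S = FF^* = I_d$. Then
\[
G^2 = (F^*F)(F^*F) = F^*(FF^*)F = F^* I_d F = F^*F = G,
\]
so $G$ is an orthogonal projection. For the converse, suppose $G^2 = G$, that is, $F^*FF^*F = F^*F$. Left-multiplying by $F$ and right-multiplying by $F^*$ gives $FF^*FF^*FF^* = FF^*FF^*$, i.e. $S^3 = S^2$. Since $S$ is invertible, multiplying by $S^{-2}$ yields $S = I_d$, so $\CF$ is a Parseval frame.

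Alternatively, the converse can be argued spectrally: by the Spectral Theorem $G$ is an orthogonal projection exactly when all its eigenvalues lie in $\{0,1\}$, and the nonzero eigenvalues of $G$ coincide with the eigenvalues of $S$; since $S$ is invertible, every eigenvalue of $S$ is positive, hence equal to $1$, which forces $S = I_d$. There is no genuine obstacle in this lemma; the only step that truly uses the frame hypothesis — rather than following from $G^2 = G$ alone — is the invertibility of $S$, which is precisely what upgrades ``$S$ is idempotent'' to ``$S$ is the identity.''
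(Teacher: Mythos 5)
Your proof is correct and follows essentially the same route as the paper's: the forward direction is the direct computation $G^2 = F^*(FF^*)F = G$, and the converse multiplies $G^2 = G$ on the left by $F$ and on the right by $F^*$ to get $S^3 = S^2$ and then invokes invertibility of $S$. The extra spectral argument and the explicit remark that invertibility of $S$ is where the frame hypothesis enters are nice additions but do not change the substance.
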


\begin{proof}
If $FF^* = I_d$, then $G^2 = G$.  If $G^2 = G$, then left-multiplying both sides by $F$ and right-multiplying both sides by $F^*$ gives $S^3 = S^2$.  Since $S$ is invertible, it follows that $S=I_d$.
\end{proof}
\vspace{0.35cm}

As investigated in \cite{AN18}, a frame $\CF=\{f_i\}_{i=1}^n$ for $\hil^d$ and a graph $\Gamma=(V,E)$ with $|V|=n$ may be {\it associated} with each other provided there is a one-to-one correspondence between vectors $f_i$ and vertices $v_i$ such that $\{v_i,v_j\}\in E$ if and only if $\langle f_i, f_j \rangle \neq 0$.  This is known as a {\em vector representation}, or for us a {\em frame representation}, of the graph.  Through an abuse of notation, we may refer to the vertices $v_i$ as the vectors $f_i$. 

We end this section by pointing out the meanings of $\msr(\Gamma)$ and $q(\Gamma)$ in the frame theoretic context and providing a specific example to illustrate the frame theoretic concepts encountered above.

Given a (connected) graph $\Gamma$ with $\msr(\Gamma)=k$, there exists some $M\in \CH^+(\Gamma)$ with rank $k$, which can be decomposed as $M = UDU^*$ by Theorem \ref{spectral}.  Letting $\widetilde D$ be the truncated matrix of only the nonzero eigenvalues of $M$ and $\widetilde U$ the corresponding truncated matrix of eigenvectors, we recognize $M$ as the Gram matrix of a frame: \[ M = \widetilde U \sqrt{\widetilde D}^* \sqrt{\widetilde D} \widetilde U^* = F^*F,\] where $F = \sqrt{\widetilde D} \widetilde U^*$ is a $k\times n$ matrix whose columns form a frame for $\hil^k$. Clearly the minimum semidefinite rank of a graph $\Gamma$ is the lowest dimension of a Hilbert space in which a frame associates with $\Gamma$.

If a graph is associated with a tight frame, then it is known as a {\em tight frame graph}.  Let $\CF = \{f_i\}_{i=1}^n$ be a frame for $\hil^d$ associated with a connected graph $\Gamma$.  The frame operator $S$ has eigenvalues $\lambda_1 \leq \lambda_2 \leq \hdots \leq \lambda_d$, where the optimal lower frame bound of $\CF$ is $A=\lambda_1$ and optimal upper frame bound is $B=\lambda_d$ \cite{CKP13}. If $\CF$ is tight, $A=B$, the Gramian $G$ must have only one nonzero eigenvalue, and $q(\Gamma) = 2$.  In fact, a connected graph is a tight frame graph if and only if it is a dual-multiplicity graph (\cite{AN18}, Theorem 5.2).

\begin{example} \label{diamondframe}
The diamond graph $K_4-e$ in Figure \ref{fig1} is a tight frame graph since
\[
F=
\frac{1}{\sqrt{10}}\bmat 
\frac{1}{\sqrt{2}} & \frac{-3}{\sqrt{2}} & 2 & 1\\
\frac{1}{\sqrt{2}} & \frac{3}{\sqrt{2}} & 1& 2\\
\emat
\]
is the synthesis matrix of a Parseval frame for $\R^2$ associated with the diamond graph.  Indeed, it is easy to check that $S = FF^* = I_2$ and $G = F^*F \in \CH^+(K_4-e).$  As mentioned in the introduction, a key advantage of frames over orthonormal bases is their redundancy, which makes frames robust to erasures.  For example, any $x\in\R^2$ can be reconstructed from the coefficients $\langle x, f_i \rangle$ as $x = Sx = \sum_{i=1}^4 \langle x, f_i \rangle f_i$, where the $f_i$ are the columns of $F$.  Suppose that, in the course of data transmission, $\langle x, f_1 \rangle$ and $\langle x, f_3 \rangle$ are lost.  Then $\widetilde \CF = \{f_2, f_4\}$ is still a spanning set for $\R^2$ and hence still a frame.  If $\widetilde S$ is the frame operator of $\widetilde \CF$, then $x$ can still be recovered as \[ x = \langle x, f_2 \rangle \widetilde S^{-1}(f_2) + \langle x, f_4 \rangle \widetilde S^{-1}(f_4). \]
We see in Figure \ref{spanning} that the original frame is robust to the loss of any two vectors. On the other hand, the sequence $\CF' = \{f_1, f_2, f_3, f_3\}$ is a frame for $\R^2$ that represents the diamond graph but which is robust to only one erasure, since reconstruction would be impossible in the event that both $\langle x, f_1 \rangle$ and $\langle x, f_2 \rangle$ were lost.

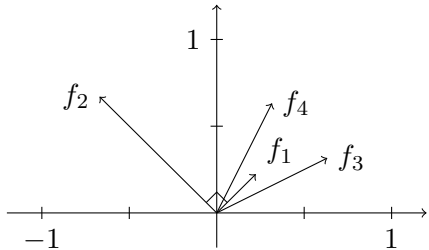
\begin{figure}[h]
\centering
\qquad
\begin{tikzpicture}[scale=2.3]
\draw [->] (0,0)-- (.223,.223)node[above right,scale=\spanscale]{$f_1$};
\draw [->] (0,0)--(-.67,.67)node[left,scale=\spanscale]{$f_2$};
\draw [->] (0,0)--(.632,.316)node[right,scale=\spanscale]{$f_3$};
\draw [->] (0,0)--(.316,.632)node[right,scale=\spanscale]{$f_4$};
\draw (-.06,.06)--(0,.12)--(.06,.06);
\draw[->] (-1.2,0) -- (1.2,0) coordinate (x axis);
\draw[->] (0,-.2) -- (0,1.2) coordinate (y axis);
\foreach \x/\xtext in {-1, -0.5/,0.5/, 1}
\draw (\x cm,1pt) -- (\x cm,-1pt) node[anchor=north,scale=\spanscale] {$\xtext$};
\foreach \y/\ytext in { 0.5/, 1}
\draw (1pt,\y cm) -- (-1pt,\y cm) node[anchor=east,scale=\spanscale] {$\ytext$};
\end{tikzpicture}
\vspace{-.5cm}
    \caption{The column vectors of the synthesis matrix in Example \ref{diamondframe}}
    \label{spanning}
\end{figure}

\end{example}

%%%%%%%%%%%%%%%%%%%%%%%%%%%%%%%%%%%%%%%%%%%%%%%%%%%%%%%
\section{Tight Frames}

In this section, we use a frame theoretic approach to give alternate proofs of some known results about tight frame graphs and to improve a result on embedding frame graphs in tight frame graphs.

\begin{lemma} \label{tightiffpars}
A graph $\Gamma$ is a tight frame graph if and only if it is a Parseval frame graph.
\end{lemma}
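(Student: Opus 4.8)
The plan is to show both directions of the equivalence by rescaling a frame's vectors. The easy direction is immediate: a Parseval frame is $1$-tight, hence tight, so any Parseval frame graph is a tight frame graph. For the converse, suppose $\Gamma$ is a tight frame graph, associated with a $B$-tight frame $\CF = \{f_i\}_{i=1}^n$ for $\hil^d$, so that its frame operator satisfies $S = FF^* = B\,I_d$ with $B > 0$. I would then set $g_i = \tfrac{1}{\sqrt{B}} f_i$ for each $i$, giving a new sequence $\CG = \{g_i\}_{i=1}^n$ with synthesis matrix $G_{\mathrm{syn}} = \tfrac{1}{\sqrt{B}} F$. Its frame operator is $\tfrac{1}{B} FF^* = \tfrac{1}{B}\cdot B\, I_d = I_d$, so $\CG$ is a Parseval frame by the characterization $S = I_d$ recorded in Section 2.

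The remaining point is that $\CG$ still represents the same graph $\Gamma$. This is because rescaling each vector by the nonzero scalar $\tfrac{1}{\sqrt{B}}$ preserves which inner products vanish: $\langle g_i, g_j\rangle = \tfrac{1}{B}\langle f_i, f_j\rangle$, so $\langle g_i, g_j\rangle \neq 0$ if and only if $\langle f_i, f_j\rangle \neq 0$, if and only if $\{v_i, v_j\} \in E(\Gamma)$. Hence $\CG$ is a frame representation of $\Gamma$, and in fact a Parseval frame representation, so $\Gamma$ is a Parseval frame graph.

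I do not anticipate a genuine obstacle here; the statement is essentially a normalization observation, and the only thing to be a little careful about is that the rescaling is by a single global constant (not vector-by-vector constants), which is exactly what tightness provides and what is needed to turn $S = B\,I_d$ into $S = I_d$. An alternative, slightly slicker route would bypass the frame operator entirely and argue via Gram matrices: if $\CF$ is $B$-tight then its Gramian $G = F^*F$ has a single nonzero eigenvalue equal to $B$, so $\tfrac{1}{B}G$ is idempotent, and by Lemma \ref{gram} the rescaled frame (whose Gramian is $\tfrac{1}{B}G$) is Parseval; since the rescaled Gramian has the same zero/nonzero pattern as $G$, it still belongs to $\CH^+(\Gamma)$. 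Either formulation is short, so I would include the frame-operator version for concreteness and perhaps remark on the Gram-matrix viewpoint.
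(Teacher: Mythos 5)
Your proposal is correct and follows essentially the same argument as the paper: rescale the $B$-tight frame by $1/\sqrt{B}$ so the frame operator becomes $I_d$, and observe that the Gramian's off-diagonal zero/nonzero pattern (hence the associated graph) is unchanged, with the converse being immediate since a Parseval frame is $1$-tight. No issues.
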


\begin{proof}
Suppose $\Gamma$ is a tight frame graph on $n$ vertices represented in $\hil^d$. Then there exists a tight frame $\CF=\{f_i\}_{i=1}^n \subseteq \hil^d$ with frame operator $S=FF^*=BI_d$ and Gramian $G = F^*F \in \CH^+(\Gamma)$. Since $\frac{1}{\sqrt{B}}F\frac{1}{\sqrt{B}}F^*=I_d$ and $\frac{1}{B}G$ has the same off-diagonal zero/nonzero pattern as $G$, $\frac{1}{\sqrt{B}}\CF$ is a Parseval frame representing $\Gamma$.  Conversely, if $\Gamma$ is a Parseval frame graph, then there exists a tight frame with frame bound $B=1$ representing $\Gamma$.
\end{proof}
\vspace{0.35cm}

The following simple but highly useful visual characterization of graphs that are not tight frame graphs was given as Corollary 3.4 of \cite{CGM14}, Corollary 4.5 of \cite{AAC13}, and Theorem 5.3 of \cite{AN18}:

\begin{proposition}\label{neighbor}
If $\Gamma$ is a tight frame graph, any two non-adjacent vertices of $\Gamma$ do not have exactly one common neighbor. 
\end{proposition}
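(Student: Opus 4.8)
The plan is to reduce to the Parseval frame case and then extract a contradiction from a single entry of the idempotency relation $G^2 = G$. By Lemma~\ref{tightiffpars}, a tight frame graph $\Gamma$ on $n$ vertices is represented by a Parseval frame $\CF = \{f_i\}_{i=1}^n$ for some $\hil^d$, so by Lemma~\ref{gram} its Gramian $G = F^*F \in \CH^+(\Gamma)$ is an orthogonal projection; in particular $G^2 = G$. I would then argue by contradiction: suppose $v_i$ and $v_j$ are non-adjacent and have exactly one common neighbor, $v_k$.

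The central computation is the $(i,j)$ entry of $G^2 = G$, which reads $\sum_{\ell=1}^{n} g_{i\ell}\, g_{\ell j} = g_{ij}$, where $g_{pq} = \langle f_q, f_p\rangle$. Since $v_i$ and $v_j$ are non-adjacent, $g_{ij}=0$, so $\sum_{\ell} g_{i\ell} g_{\ell j} = 0$. Next I would show that exactly one summand survives. The term $\ell = i$ contributes $g_{ii}g_{ij} = \|f_i\|^2\, g_{ij} = 0$, and likewise $\ell = j$ contributes $g_{ij}\|f_j\|^2 = 0$. For $\ell \notin \{i,j\}$, the product $g_{i\ell} g_{\ell j} = \langle f_\ell, f_i\rangle \langle f_j, f_\ell\rangle$ vanishes unless $v_\ell$ is adjacent to both $v_i$ and $v_j$, i.e.\ unless $v_\ell$ is a common neighbor of $v_i$ and $v_j$; by hypothesis the only such vertex is $v_k$, and $k \notin \{i,j\}$ since $\Gamma$ is loopless. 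Hence $\sum_\ell g_{i\ell} g_{\ell j} = g_{ik}g_{kj} = \langle f_k, f_i\rangle\langle f_j, f_k\rangle$, and both factors are nonzero because $v_k$ is adjacent to $v_i$ and to $v_j$. This contradicts $\sum_\ell g_{i\ell} g_{\ell j} = 0$, completing the proof.

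I do not expect a genuine obstacle here: once the statement is phrased through the idempotent Gramian, the proof is a bookkeeping exercise over the index $\ell$. The only points requiring care are fixing the Gramian convention $g_{pq} = \langle f_q, f_p\rangle$ consistently so that the off-diagonal zero/nonzero pattern of $G$ matches non-adjacency/adjacency in $\Gamma$, and checking that the diagonal terms $\ell = i$ and $\ell = j$ drop out — which they do precisely because $v_i \not\sim v_j$, so that $v_k$ is the unique surviving index. A cosmetically different but equivalent route avoids Lemma~\ref{gram} and uses the frame operator directly: tightness gives $FF^* = B I_d$, hence $f_i = B^{-1} S f_i = B^{-1}\sum_\ell \langle f_i, f_\ell\rangle f_\ell$, so $\langle f_i, f_j\rangle = B^{-1}\sum_\ell \langle f_i, f_\ell\rangle\langle f_\ell, f_j\rangle$; the same case analysis on $\ell$ then forces the nonzero term $\langle f_i, f_k\rangle\langle f_k, f_j\rangle$ to equal $0$, again a contradiction.
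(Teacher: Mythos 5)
Your proof is correct and follows essentially the same route as the paper's: reduce to a Parseval frame via Lemma~\ref{tightiffpars}, then read off the $(i,j)$ entry of $G^2=G$ from Lemma~\ref{gram} and observe that the unique common neighbor $v_k$ contributes the only nonzero term $\langle f_k,f_i\rangle\langle f_j,f_k\rangle$, a contradiction. Your explicit handling of the $\ell=i$ and $\ell=j$ terms is just a spelled-out version of the paper's blanket claim that $\langle f_\ell,f_i\rangle\langle f_j,f_\ell\rangle=0$ for all $\ell\neq k$.
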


\begin{proof}
By Lemma \ref{tightiffpars}, we may assume that $\Gamma$ is a Parseval frame graph, with corresponding Parseval frame $\CF$ and Gram matrix $G$.  Assume non-adjacent vertices $f_i$ and $f_j$ have a unique common neighbor $f_k$.  Then $\langle f_j, f_i \rangle = 0$, $\langle f_k, f_i \rangle \neq 0$, $\langle f_j, f_k \rangle \neq 0$, and $\langle f_l, f_i \rangle \langle f_j, f_l \rangle = 0$ for all $l\neq k$.  By Lemma \ref{gram}, 
\begin{equation*}
  0 = \langle f_j, f_i \rangle = \sum_{l=1}^n \langle f_l, f_i \rangle \langle f_j, f_l \rangle = \langle f_k, f_i \rangle \langle f_j, f_k \rangle,
\end{equation*}
a contradiction.
\end{proof}
\vspace{0.35cm}

As a consequence of the previous result, a connected tight frame graph with three or more vertices must be bridgeless.  In fact, we can say more.

\begin{corollary}
If $\Gamma$ is a connected tight frame graph with at least three vertices, then every edge of $\Gamma$ belongs to a 3-cycle or a 4-cycle.
\end{corollary}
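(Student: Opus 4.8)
The plan is to fix an arbitrary edge $\{u,v\}$ of $\Gamma$ and exhibit a cycle of length $3$ or $4$ through it, using Proposition \ref{neighbor} as the only nontrivial input. One case is immediate: if $u$ and $v$ have a common neighbor $w$, then $u,v,w$ span a $3$-cycle containing $\{u,v\}$ and we are done. So the real content is the case in which $u$ and $v$ have \emph{no} common neighbor, and the goal becomes to find a $4$-cycle through $\{u,v\}$.

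Assume then that $u$ and $v$ share no neighbor. First I would note that, since $\Gamma$ is connected with at least three vertices, $u$ and $v$ cannot both have degree $1$: otherwise $\{u,v\}$ would be an isolated edge, hence a connected component of order $2$. So, after relabeling if necessary, $u$ has a neighbor $w \neq v$. This $w$ is not adjacent to $v$, for if it were, $w$ would be a common neighbor of $u$ and $v$, contrary to the case hypothesis. Thus $w$ and $v$ are non-adjacent vertices that have $u$ as a common neighbor.

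Now apply Proposition \ref{neighbor}: in a tight frame graph, two non-adjacent vertices cannot have exactly one common neighbor, so $w$ and $v$ must have a second common neighbor $x \neq u$. Because $\Gamma$ is simple (no loops), $x$ is distinct from $w$ and from $v$; it is distinct from $u$ by choice; and $u \neq v$ while $w \neq u, v$. Hence $u,v,x,w$ are four distinct vertices, and the edges $\{u,v\}$, $\{v,x\}$, $\{x,w\}$, $\{w,u\}$ form a $4$-cycle containing $\{u,v\}$, which completes the argument.

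I do not expect a genuine obstacle here: once Proposition \ref{neighbor} is available, this is a short case analysis. The only step needing a little care is the bookkeeping that $u,v,w,x$ are pairwise distinct, so that one truly obtains a $4$-cycle rather than a degenerate shorter closed walk; this relies only on the absence of loops together with the inequalities built into the choices of $w$ and $x$.
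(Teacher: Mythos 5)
Your proof is correct and follows essentially the same route as the paper's: pick a neighbor $w$ of one endpoint of the edge, get a $3$-cycle if it yields a common neighbor, and otherwise invoke Proposition \ref{neighbor} to produce a second common neighbor and hence a $4$-cycle. Your version just spells out more carefully the degree argument and the pairwise distinctness of $u,v,w,x$, which the paper leaves implicit.
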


\begin{proof}
Suppose $\{u,v\} \in E(\Gamma)$.  Since $\Gamma$ is connected with $|V(\Gamma)| \geq 3$, we may assume there exists $w\in V(\Gamma)$ such that $\{v,w\} \in E(\Gamma)$.  If $u$ is adjacent to $w$, then $\{u,v\}$ belongs to a 3-cycle.  Otherwise, if $u$ is not adjacent to $w$, then $\{u,v\}$ belongs to a 4-cycle by Proposition \ref{neighbor}.
\end{proof}
\vspace{0.35cm}

The redundancy (overcompleteness) of frames is the feature that makes them desirable in many applications when compared to orthonormal bases.  The following shows that redundancy is necessary for tight frames associated with connected (or merely not edgeless) graphs.  

\begin{proposition} \label{d<N}
Suppose $\CF = \{f_i\}_{i=1}^n$ is a tight frame for $\hil^d$ associated with a connected graph $\Gamma$.  If $d\geq 2$, then $d < n$.
\end{proposition}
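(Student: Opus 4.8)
The plan is to argue by contradiction, the only thing to rule out being the case $n = d$. First I would observe that since $\CF$ is a frame for $\hil^d$ its span is all of $\hil^d$, which forces $n \geq d$; so it suffices to show that $n = d$ is impossible when $d \geq 2$.

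Suppose then that $n = d$, so the synthesis matrix $F$ is square. Tightness gives $S = FF^* = B I_d$ for some $B > 0$, hence $\tfrac{1}{\sqrt{B}} F$ is unitary; multiplying the factors in the other order yields $G = F^*F = B I_n$. In particular $\langle f_i, f_j \rangle = 0$ for all $i \neq j$. (Equivalently, by Lemma \ref{tightiffpars} we may assume $\CF$ is Parseval, in which case Lemma \ref{gram} says $G$ is an orthogonal projection, and an $n \times n$ projection of rank $n$ is $I_n$ — the same conclusion.) But the off-diagonal entries of $G$ vanishing means precisely that $\Gamma$ has no edges, while $d \geq 2$ forces $\Gamma$ to have $n = d \geq 2$ vertices. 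An edgeless graph on at least two vertices is disconnected, contradicting the hypothesis that $\Gamma$ is connected. Therefore $n \neq d$, and combined with $n \geq d$ we conclude $d < n$.

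I do not expect a genuine obstacle here: the one point worth flagging is that a square tight frame is automatically an orthogonal family, which is immediate once $FF^*$ is a scalar multiple of the identity. The hypothesis $d \geq 2$ enters exactly to exclude the trivial exception $\Gamma = K_1$ with $d = n = 1$, where a single nonzero vector is a tight frame for $\hil^1$ associated with the (connected) one-vertex graph.
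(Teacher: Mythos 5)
Your proof is correct and follows essentially the same route as the paper: a square tight synthesis matrix is a scalar multiple of a unitary, so the Gram matrix is a scalar multiple of $I_n$, forcing $\Gamma$ to be edgeless and contradicting connectedness. The only cosmetic differences are that you work with a general tight bound $B$ rather than first normalizing to a Parseval frame via Lemma \ref{tightiffpars}, and you make explicit the (standard, implicit in the paper) observation that spanning gives $n \geq d$.
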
 

\begin{proof}
By Lemma \ref{tightiffpars}, assume $F$ is the synthesis matrix of a Parseval frame for $\hil^d$ with Gramian $G = F^*F \in \CH^+(\Gamma)$.  Since $S = FF^* = I_d$, the rows of $F$ are orthonormal.  If $d=n$, then $F$ is a unitary matrix, and $G = S = I_d$, which implies $\Gamma$ is edgeless, contradicting the assumption that $\Gamma$ is connected.
\end{proof}

\begin{corollary}
Suppose $\CF = \{f_i\}_{i=1}^n$ is a tight frame for $\hil^d$ associated with a connected graph $\Gamma$.  If $d\geq 2$, then zero is an eigenvalue of its Gramian $G$.
\end{corollary}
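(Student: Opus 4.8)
The plan is to derive this immediately from Proposition~\ref{d<N} together with the standard spectral relationship between a frame operator and its Gramian. The single substantive input is already available: since $d \ge 2$ and $\Gamma$ is connected, Proposition~\ref{d<N} gives $d < n$. Everything after that is elementary linear algebra.

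First I would recall, as noted in the paragraph preceding Lemma~\ref{gram}, that the nonzero eigenvalues of the positive semidefinite Hermitian matrix $G = F^*F$ coincide, with multiplicity, with the eigenvalues of the frame operator $S = FF^*$. Because $\CF$ is tight, $S = BI_d$ for some $B > 0$, so $S$ has the single positive eigenvalue $B$ with multiplicity $d$. Hence $G$, an $n \times n$ matrix, has exactly $d$ positive eigenvalues, and its remaining $n - d$ eigenvalues are zero. Since $d < n$ we have $n - d \ge 1$, so zero is an eigenvalue of $G$, which completes the argument.

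An alternative route, which I would probably use because it is shorter and avoids invoking tightness a second time, is the rank bound: $\rank(G) = \rank(F^*F) = \rank(F) \le d < n$, so $G$ is singular and therefore has zero as an eigenvalue. Either way there is no real obstacle here; the only point requiring any care is to justify that $G$ has at most $d$ nonzero eigenvalues, which follows either from the $FF^*$--$F^*F$ spectral correspondence or simply from $\rank(F) \le d$.
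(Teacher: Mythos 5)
Your proposal is correct and, in particular, your ``alternative route'' is exactly the paper's proof: the paper argues in one line that $\rank(G) = \rank(S) = d < n$ by Proposition~\ref{d<N}, so $G$ is singular. Your first argument via the $FF^*$--$F^*F$ spectral correspondence is a harmless elaboration of the same idea, so there is nothing substantively different here.
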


\begin{proof}
If $S$ is the frame operator of $\CF$, then $\rank(G) = \rank(S) = d < n$, by Proposition \ref{d<N}.
\end{proof}
\vspace{0.35cm}

Naimark's Theorem is a central result in the theory of Parseval frames \cite{CKP13}.  A {\it Naimark complement} of a Parseval frame $\CF=\{f_i\}_{i=1}^n$ for $\hil^d$ for $d<n$ can be viewed as a completion of the $d\times n$ synthesis matrix to an $n\times n$ unitary matrix \cite{CFM13}. As an explicit construction, begin with a Parseval frame $\CF=\{f_i\}_{i=1}^n$ for $\hil^d$ with Gramian $G=F^*F$. By Theorem \ref{spectral}, we can write $G=UDU^*$ where $U$ is unitary and $D=I_d \oplus 0_{n-d}$. Since
\begin{equation*}
\begin{aligned}
    I_n-G = I_n -UDU^* =U(I_n-D)U^*,
\end{aligned}
\end{equation*}
we see that $I_n-G$ is the Gram matrix of the Parseval frame $\widetilde\CF$ for $\hil^{n-d}$ whose synthesis matrix is the last $n-d$ rows of $U^*$.  And $I_n-G$ has the same off-diagonal zero/nonzero pattern as $G$, so $\CF$ and its complement $\widetilde\CF$ have the same associated graph (see also Theorem 2.1 of \cite{CGM14}). By Lemma \ref{tightiffpars}, we may refer to the {\em frame complement} in the case of a tight frame graph $\Gamma$ and note that $\msr(\Gamma)\leq \lfloor \frac{n}{2} \rfloor$.

\begin{proposition}\label{KnHn-1}
$K_n$ is the only connected graph on $n$ vertices represented by a tight frame for $\hil^{n-1}$.
\end{proposition}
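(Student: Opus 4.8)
The plan is to exploit the frame complement construction introduced just before the statement. Suppose $\Gamma$ is a connected graph on $n\geq 2$ vertices that is represented by a tight frame for $\hil^{n-1}$. By Lemma \ref{tightiffpars} I may assume the representing frame $\CF=\{f_i\}_{i=1}^n$ is Parseval, so its synthesis matrix $F$ has size $(n-1)\times n$ and its Gramian $G=F^*F$ lies in $\CH^+(\Gamma)$. Since $n-1<n$, the frame complement $\widetilde\CF$ is a Parseval frame for $\hil^{\,n-(n-1)}=\hil^{1}$ whose Gram matrix $I_n-G$ has the same off-diagonal zero/nonzero pattern as $G$; hence $\widetilde\CF$ also represents $\Gamma$.

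Now $\widetilde\CF=\{\widetilde f_i\}_{i=1}^n$ is simply a sequence of $n$ scalars in $\hil^1$. The key step is to observe that connectedness forces each $\widetilde f_i\neq 0$: if some $\widetilde f_k=0$, then $\langle \widetilde f_k,\widetilde f_i\rangle=0$ for all $i$, so the vertex $v_k$ would be isolated, contradicting the connectedness of $\Gamma$ (here we use $n\geq 2$). Consequently $\langle \widetilde f_j,\widetilde f_i\rangle=\overline{\widetilde f_i}\,\widetilde f_j\neq 0$ for every pair $i\neq j$, which means every two vertices of $\Gamma$ are adjacent, i.e., $\Gamma=K_n$.

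To complete the ``only'' claim it remains to check that $K_n$ genuinely does occur: the constant frame $\{\tfrac{1}{\sqrt n}\}_{i=1}^n$ is a Parseval frame for $\hil^1$ all of whose pairwise inner products are nonzero, so it represents $K_n$; its frame complement is then a Parseval (hence tight) frame for $\hil^{n-1}$ representing $K_n$. (Alternatively, since $\msr(K_n)=1$ one obtains the $(n-1)$-dimensional tight representation directly by complementing a rank-one member of $\CH^+(K_n)$.)

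I do not anticipate a genuinely hard step here; the argument is short once the right tool is chosen. The only points requiring care are (i) invoking the frame complement so that the complementary dimension is \emph{exactly} $1$, which is what makes all inner products products of single scalars, and (ii) using connectedness (and $n\geq 2$) to rule out any zero scalar in $\widetilde\CF$, so that \emph{all} off-diagonal Gram entries are forced to be nonzero.
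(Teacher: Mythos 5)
Your proposal is correct and follows essentially the same route as the paper: pass to the frame complement to obtain a tight (Parseval) frame for $\hil^1$ representing the same graph, and then conclude $\Gamma=K_n$. The only difference is cosmetic — the paper cites its background observation that the only connected graph with $\msr(\Gamma)=1$ is $K_n$, whereas you re-derive that fact inline (and add the existence check), which is the same underlying argument.
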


\begin{proof}
If $\CF$ is a tight frame for $\hil^{n-1}$ with Gramian $G\in\CH^+(\Gamma)$, then the frame complement is a tight frame for $\hil$, but the only connected graph with $\msr(\Gamma)=1$ is $\Gamma = K_n$.
\end{proof}
\vspace{0.35cm}

Although it is proved in \cite{CGM14} that any graph may occur as an induced subgraph of a tight frame graph, with the same minimum semidefinite rank, the following frame theoretic approach often gives a smaller such embedding.

\begin{theorem}[\cite{CFM13}, Proposition 2.1]\label{tightcompletion}
Let $\CF=\{f_j\}_{j=1}^n$ be a frame for $\hil^d$ with synthesis matrix $F$. Suppose $S=FF^*$ has orthonormal eigenvectors $\{x_i\}_{i=1}^d$ corresponding to eigenvalues $A= \lambda_1 \leq \lambda_2 \leq \hdots \leq \lambda_k < \lambda_{k+1} = \hdots = \lambda_d = B$.  Then $\{f_j\}_{j=1}^n \cup \{h_i\}_{i=1}^k$ is a $B$-tight frame for $\hil^d$ where
\begin{equation*}
    h_i = (B-\lambda_i)^{\frac{1}{2}} x_i
\end{equation*}
for $1 \leq i \leq k$.
\end{theorem}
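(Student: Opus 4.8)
The plan is to compute the frame operator of the augmented family $\{f_j\}_{j=1}^n \cup \{h_i\}_{i=1}^k$ and verify that it equals $B I_d$. Since this family contains the spanning set $\CF$, it is automatically a frame for $\hil^d$, and a frame whose frame operator equals $B I_d$ is $B$-tight; so the computation is all that is required.

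First I would record that $B - \lambda_i \geq 0$ for every $i$, because $B = \lambda_d$ is the largest eigenvalue of $S$; hence each $h_i = (B-\lambda_i)^{\frac{1}{2}} x_i$ is a well-defined vector in $\hil^d$. Let $H$ be the $d \times k$ matrix with columns $h_1, \dots, h_k$ and let $\widetilde F = \bmat F & H \emat$ be the synthesis matrix of the augmented family. By block multiplication its frame operator is
\[
\widetilde F \widetilde F^* = FF^* + HH^* = S + \sum_{i=1}^k h_i h_i^*.
\]

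Next I would expand $S$ via the Spectral Theorem (Theorem \ref{spectral}) as $S = \sum_{i=1}^d \lambda_i x_i x_i^*$, using that $\{x_i\}_{i=1}^d$ is an orthonormal eigenbasis. Since $h_i h_i^* = (B-\lambda_i) x_i x_i^*$, this gives
\[
\widetilde F \widetilde F^* = \sum_{i=1}^k B\, x_i x_i^* + \sum_{i=k+1}^d \lambda_i x_i x_i^* = \sum_{i=1}^k B\, x_i x_i^* + \sum_{i=k+1}^d B\, x_i x_i^* = B \sum_{i=1}^d x_i x_i^* = B I_d,
\]
where the middle equality uses $\lambda_i = B$ for $k+1 \leq i \leq d$ and the last uses that an orthonormal basis satisfies $\sum_{i=1}^d x_i x_i^* = I_d$. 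This completes the argument.

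There is no real obstacle here: the whole proof is a single spectral computation. The only points worth a sentence of care are the nonnegativity $B - \lambda_i \geq 0$, which makes the square roots real, and the observation that the eigenvectors $x_i$ with $i > k$ already carry eigenvalue $B$ and hence need no accompanying new vector — which is precisely why exactly $k$ vectors are adjoined rather than $d$.
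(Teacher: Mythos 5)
Your computation is correct and complete: expanding $S=\sum_{i=1}^d \lambda_i x_i x_i^*$ and adding $\sum_{i=1}^k (B-\lambda_i)x_i x_i^*$ does give $B I_d$, and since the augmented family contains the spanning set $\CF$, it is a frame whose frame operator is $BI_d$, hence $B$-tight. The paper itself gives no proof of this statement --- it is imported verbatim from Proposition 2.1 of \cite{CFM13} --- but your argument is the standard spectral verification one would expect there, so there is nothing to flag.
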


\begin{corollary} \label{min}
Any connected frame graph is an induced subgraph of a connected tight frame graph, represented by vectors in the same Hilbert space.  For a specific frame, the number of additional vectors required for the embedding is minimal.
\end{corollary}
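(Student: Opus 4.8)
The plan is to obtain the embedding as a direct application of Theorem~\ref{tightcompletion} and then to establish minimality by a rank count. Fix a connected graph $\Gamma$ on $n$ vertices and a frame $\CF = \{f_j\}_{j=1}^n$ for $\hil^d$ representing $\Gamma$ (we may take $d = \msr(\Gamma)$), with synthesis matrix $F$ and frame operator $S = FF^*$. Write the eigenvalues of $S$ as $A = \lambda_1 \le \cdots \le \lambda_k < \lambda_{k+1} = \cdots = \lambda_d = B$ with orthonormal eigenvectors $\{x_i\}_{i=1}^d$; the case $k = 0$, in which $\CF$ is already tight and nothing need be adjoined, is trivial, so assume $k \ge 1$. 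By Theorem~\ref{tightcompletion}, $\CF' = \{f_j\}_{j=1}^n \cup \{h_i\}_{i=1}^k$ with $h_i = (B-\lambda_i)^{1/2}x_i$ is a $B$-tight frame for $\hil^d$. Let $\Gamma'$ be the graph associated with $\CF'$. Since the inner products among the $f_j$ are unchanged, the subgraph of $\Gamma'$ induced on $\{f_1,\dots,f_n\}$ is precisely $\Gamma$, so $\Gamma$ is an induced subgraph of the tight frame graph $\Gamma'$, represented in the same space $\hil^d$.

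Next I would verify that $\Gamma'$ is connected. The vectors $h_1,\dots,h_k$ are pairwise orthogonal, since the $x_i$ are, so they form an independent set in $\Gamma'$; it therefore suffices to show that each $h_i$ is adjacent to some $f_j$. If on the contrary $\langle h_i, f_j\rangle = 0$ for all $j$, then, as $B - \lambda_i \neq 0$, the vector $x_i$ would be orthogonal to $\mbox{span}\{f_1,\dots,f_n\} = \hil^d$, forcing $x_i = 0$, a contradiction. Hence each added vertex is adjacent to the connected graph $\Gamma$, so $\Gamma'$ is connected.

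For minimality (with $\CF$ fixed), suppose $\CF \cup \{g_1,\dots,g_m\}$ is a $C$-tight frame for $\hil^d$, with $G$ the synthesis matrix of the $g_i$. Then $S + GG^* = CI_d$, so $GG^* = CI_d - S$. As $GG^*$ is positive semidefinite and the largest eigenvalue of $S$ is $B$, this forces $C \ge B$; moreover $\rank(GG^*) = \rank(CI_d - S)$ equals $k$ when $C = B$ and equals $d > k$ when $C > B$. In either case $m \ge \rank(G) = \rank(GG^*) \ge k$, while the construction of $\CF'$ adjoins exactly $k$ vectors; hence $k$ is the minimal number of vectors one must add to $\CF$ to obtain a tight frame.

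I expect the embedding and connectedness steps to be routine once Theorem~\ref{tightcompletion} is invoked; the only point requiring care is the minimality count — specifically, recognizing that the tight constant of any extension of $\CF$ is forced to be at least $B$, and that $C = B$ is exactly the choice keeping $\rank(CI_d - S)$ down to $k$, so that no extension with fewer than $k$ vectors can be tight.
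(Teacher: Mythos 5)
Your proof is correct, and the embedding and connectedness steps coincide with the paper's: you invoke Theorem~\ref{tightcompletion}, note that the inner products among the $f_j$ are untouched so $\Gamma$ sits inside $\Gamma'$ as an induced subgraph, and use the fact that $\CF$ spans $\hil^d$ and each $h_i\neq 0$ to conclude that every added vertex is adjacent to $\Gamma$. Where you genuinely diverge is the minimality count. The paper argues by contradiction via a theorem of Weyl on eigenvalues of sums of Hermitian matrices: if only $k'<k$ vectors were adjoined, then $C=\lambda_1(FF^*+H'H'^*)\leq \lambda_{1+k'}(FF^*)<B$, contradicting $C\geq B$. You instead solve for the perturbation directly: any tight completion $\{g_i\}_{i=1}^m$ must satisfy $GG^*=CI_d-S$, positive semidefiniteness forces $C\geq B$, and the spectrum of $CI_d-S$ then has rank exactly $k$ (if $C=B$) or $d>k$ (if $C>B$), whence $m\geq\rank(G)=\rank(GG^*)\geq k$. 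Your route is more elementary and self-contained --- it needs no external eigenvalue-perturbation inequality, only the observation that the completion's Gram-type matrix is completely determined by $S$ and $C$ --- and as a bonus it identifies $C=B$ as the unique tight constant achievable with the minimal number of added vectors. The paper's Weyl argument is shorter on the page but outsources the key inequality to a citation. Both are valid; yours is arguably the cleaner proof of minimality.
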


\begin{proof}
Let $\Gamma$ be a connected graph represented by the frame $\CF=\{f_j\}_{j=1}^n \subseteq \hil^d$, and let $\{h_i\}_{i=1}^k$ be defined as in Theorem \ref{tightcompletion}.  Since $\CF$ is a spanning set for $\hil^d$ and each $h_i \neq 0$, it is impossible for $\langle h_i, f_j \rangle = 0$ for all $1\leq j \leq n$, which ensures that the corresponding tight frame graph, represented by $\{f_j\}_{j=1}^n \cup \{h_i\}_{i=1}^k$, is connected.  As noted in \cite{CFM13}, this construction adds a minimal number of vectors needed to complete $\CF$ to a tight frame.  Indeed, suppose the eigenvalues of $S = FF^*$ are $A= \lambda_1 \leq \lambda_2 \leq \hdots \leq \lambda_k < \lambda_{k+1} = \hdots = \lambda_d = B$ and there exist vectors $h'_i \in \hil^d$ for $1\leq i \leq k' < k$ such that  writing $H'$ for the matrix with columns $\{h'_i\}_{i=1}^{k'}$ yields $[F \ H'] [F \ H']^* = FF^* + H'H'^* = CI_d$ for some constant $C>0$.  Since $B = \lambda_d$ is the optimal upper frame bound of $\CF$, Inequality (\ref{framedef}) implies $B\leq C$. By a theorem of Weyl (\cite{HJ13}, Corollary 4.3.5), $C = \lambda_1(FF^* + H'H'^*) \leq \lambda_{1+k'}(FF^*) < B$, a contradiction.
\end{proof}

\begin{remark} \label{comparison}
For a given graph, we can contrast the the worst-case number of additional vectors required by the construction in Corollary \ref{min} with that of Theorem 4.2 of \cite{CGM14}.  By Corollary \ref{min}, the largest possible value for the additional number of vectors is $d-1$, in the case when all eigenvalues of $S$ are distinct.  On the other hand, the construction in the proof of Theorem 4.2 of \cite{CGM14} is a two-step process:  first, columns are adjoined to $F$ to yield a matrix with orthogonal rows; then more columns are adjoined to result in equal-norm rows, which may then be normalized.  The entire process could result in up to $\frac{(d-1)(d+2)}{2}$ additional vectors.
\end{remark}

We illustrate the difference between the two constructions compared in Remark \ref{comparison}.  If we take the frame in Example \ref{diamondframe} and delete a single vector to create the frame $\CF'$, the rows of the corresponding synthesis matrix $F'$ are no longer orthogonal, so the frame is no longer tight. Then $\CF'$ is a frame for $\hil^2$ representing a truncated version of Figure \ref{fig1} (either $P_3$ or $C_3$), and its frame operator $S$ has two distinct eigenvalues $A<B$.  (Although $C_3$ is a tight frame graph, both constructions may proceed for a non-tight frame representation of it.)  By Theorem \ref{tightcompletion}, only one additional vector is needed to complete $\CF'$ to a $B$-tight frame. That is to say the truncated graph occurs as an induced subgraph of some tight frame graph on four vertices. On the other hand, applying the construction method from Theorem 4.2 of \cite{CGM14} to $\CF'$, one vector is added to make the rows of $F'$ orthogonal and one additional vector must then be added to normalize the rows. The embedding is therefore not minimal for this frame. Note that the frame given in Example 6.10 of \cite{CGM14} corresponds to the same graph, and if either of the first two columns is deleted, then the construction method in Theorem 4.2 of \cite{CGM14} will only add a single vector; however, if either of the second two columns is deleted, then two additional vectors will be added.

For larger tight frame graphs, if a single frame vector is deleted (resulting in a non-tight frame), the embedding in Theorem 4.2 of \cite{CGM14} may add significantly more vectors to the frame than necessary to embed in a tight frame graph. Consider the tight frame graph $\Gamma=K_3 \cp K_2$ with tight frame given in Figure 2.1 of \cite{CGM14}. The deletion of any single vertex necessarily results in a non-tight frame graph.  Applying Theorem 4.2 of \cite{CGM14} adds between 3 and 5 vectors to the truncated frame while Theorem \ref{tightcompletion} appends only one vector. Figure \ref{embedcompare} below shows the tight frame graph created by deleting the first vector of the tight frame in Figure 2.1 of \cite{CGM14} and then applying the construction techniques in Theorem 4.2 of \cite{CGM14} and Theorem \ref{tightcompletion}, respectively. The highlighted vertices and edges constitute the truncated frame graph represented by the frame with synthesis matrix 
\[
F'=
\frac{1}{\sqrt{5}}\bmat
1 & 1 & -1 & 1 & 1\\
0 & 1 & 1 & -1 & 1\\
1 & 0 & 1 & 1 & -1\\
\emat
\]
and frame bounds $A=3/5$ and $B=1$;
the two algorithms construct Parseval frames with synthesis matrices
\[
F_1=
\frac{1}{3}\bmat
1 & 1 & -1 & 1 & 1&0&2&0\\
0 & 1 & 1 & -1 & 1&\sqrt{5}&0&0\\
1 & 0 & 1 & 1 & -1&\frac{1}{\sqrt{5}}&0&\frac{2\sqrt{30}}{5}\\
\emat
\ 
\mbox{and }
\
F_2=
\frac{1}{\sqrt{5}}\bmat
1 & 1 & -1 & 1 & 1& 0\\
0 & 1 & 1 & -1 & 1& 1\\
1 & 0 & 1 & 1 & -1& 1\\
\emat,
\] respectively.

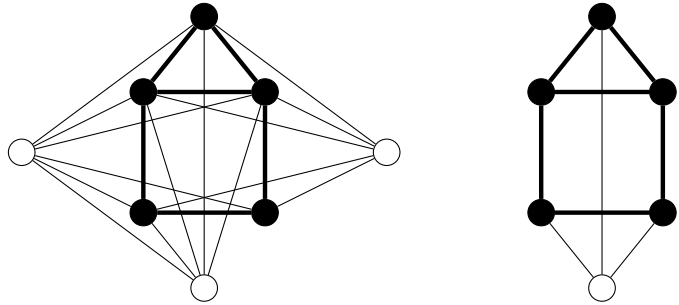
\begin{figure}[hbt]
\centering
    \begin{tikzpicture}
    \begin{scope}[every/.style={circle,draw,inner sep=0pt,minimum size=\mynmnodesz}]
        \node[fill=black,every] (v1) at (-.8,0) {};
        \node[fill=black,every] (v2) at (.8,0) {};
        \node[fill=black,every] (v3) at (0,2.6) {};
        \node[fill=black,every] (v4) at (-.8,1.6) {};
        \node[fill=black,every] (v5) at (.8,1.6) {};
        \node[every] (v6) at (-2.4,.8) {};
        \node[every] (v7) at (2.4,.8) {};
        \node[every] (v8) at (0,-1) {};
    \end{scope}
    \path (v1) edge[line width=1.65pt] node {} (v2);
    \path (v1) edge[line width=1.65pt] node {} (v4);
    \path (v2) edge[line width=1.65pt] node {} (v5);
    \path (v4) edge[line width=1.65pt] node {} (v5);
    \path (v4) edge[line width=1.65pt] node {} (v3);
    \path (v5) edge[line width=1.65pt] node {} (v3);
    \path (v6) edge node {} (v1);
    \path (v6) edge node {} (v2);
    \path (v6) edge node {} (v3);
    \path (v6) edge node {} (v4);
    \path (v6) edge node {} (v5);
    \path (v7) edge node {} (v1);
    \path (v7) edge node {} (v2);
    \path (v7) edge node {} (v3);
    \path (v7) edge node {} (v4);
    \path (v7) edge node {} (v5);
    \path (v8) edge node {} (v1);
    \path (v8) edge node {} (v4);
    \path (v8) edge node {} (v5);
    \path (v8) edge node {} (v6);
    \path (v8) edge node {} (v3);
    \end{tikzpicture}
\qquad
\qquad
\begin{tikzpicture}
    \begin{scope}[every/.style={circle,draw,inner sep=0pt,minimum size=\mynmnodesz}]
        \node[fill=black,every] (v1) at (-.8,0) {};
        \node[fill=black,every] (v2) at (.8,0) {};
        \node[fill=black,every] (v3) at (0,2.6) {};
        \node[fill=black,every] (v4) at (-.8,1.6) {};
        \node[fill=black,every] (v5) at (.8,1.6) {};
        \node[every] (v6) at (0,-1) {};
    \end{scope}
    \path (v1) edge[line width=1.65pt] node {} (v2);
    \path (v1) edge[line width=1.65pt] node {} (v4);
    \path (v2) edge[line width=1.65pt] node {} (v5);
    \path (v4) edge[line width=1.65pt] node {} (v5);
    \path (v4) edge[line width=1.65pt] node {} (v3);
    \path (v5) edge[line width=1.65pt] node {} (v3);
    \path (v6) edge node {} (v1);
    \path (v6) edge node {} (v2);
    \path (v6) edge node {} (v3);
    \end{tikzpicture}
    \caption{The graphs represented by tight frames with synthesis matrices $F_1$ (obtained via Theorem 4.2 of \cite{CGM14}) and $F_2$ (obtained via Theorem \ref{tightcompletion}), respectively}
    \label{embedcompare}
  \end{figure}
  
\begin{remark}
If $F'$ does not have a scaling factor of  $\frac{1}{\sqrt{5}}$, the construction technique from Theorem 4.2 of \cite{CGM14} appends only a single vector to make the rows orthogonal and equal norm. But even without scaling, deleting any of the last three vectors from the original frame in Figure 2.1 of \cite{CGM14} results in three vectors being appended to achieve orthogonality, using the technique of \cite{CGM14}.
\end{remark}

%%%%%%%%%%%%%%%%%%%%%%%%%%%%%%%%%%%%%%%%%%%%%%%%%%%%%
\section{Line Graphs}

It is natural to try to construct frames for graphs using the Laplacian matrix since the resultant frame has a graph theoretic interpretation. This approach, however, requires us to consider the Laplacian of a graph and the vector representation of its line graph.

Given a graph $\Gamma$, the {\it (unoriented) incidence matrix} $B^u(\Gamma)$ is the $|V|\times |E|$ matrix with $b^u_{ij}=1$ if vertex $v_i$ is an endpoint of the $j$-th edge and $0$ otherwise.  We may assign an {\em orientation} to $\Gamma$ by associating each edge $\{v_i,v_k\}$ arbitrarily with one of the ordered pairs $(v_k,v_i)$ (in which case, we say the edge is negatively incident to $v_k$ and positively incident to $v_i$) or $(v_i,v_k)$; the {\it oriented incidence matrix} $B(\Gamma)$ has $b_{ij}=1=-b_{kj}$ if the $j$-th edge $\{v_i, v_k\}$ is associated to $(v_k, v_i)$.

For a connected graph $\Gamma$ with $n$ vertices, the Laplacian matrix $L = L(\Gamma)$ is a real, positive semidefinite matrix of rank $n-1$.  The eigenvalue $\lambda_0 = 0$ corresponds to the one-dimensional eigenspace spanned by the all-ones eigenvector $x_0 = \bf{1}$.  Now, $L(\Gamma)$ can be decomposed as $L(\Gamma) = B(\Gamma)B(\Gamma)^*$, where $B = B(\Gamma)$ is an oriented incidence matrix of $\Gamma$.  Since the frame operator must be full rank, the authors of \cite{BBCO17} create an $(n-1)\times (n-1)$ matrix $L_0$ by restricting $B$ to the $(n-1)$-dimensional subspace spanned by the orthonormal eigenvectors $x_1, \ldots, x_{n-1}$ of $L$ excluding $x_0$.  They let $X = [ x_1 \ x_2 \ \cdots \ x_{n-1} ]$ and consider $F = X^* B$ and $L_0 = FF^*$.  Then $L_0$ is the (full rank) frame operator of the frame for $\hil^{n-1}$ with synthesis matrix $F$, whose spectrum consists of the nonzero eigenvalues of $L$ (\cite{BBCO17}, Lemma 5).  We will refer to a frame constructed in this manner as being constructed by the {\em Laplacian method}.  

The only connected graph which yields a tight frame constructed by the Laplacian method is $K_n$, by Proposition \ref{KnHn-1}.  However, this frame will not represent $K_n$ in the sense used in this paper; that is, the Gram matrix of the frame will not belong to $\CH^+(K_n)$.  To establish the relationship between the frame and the graph, we instead are motivated by the well-known connection between the unoriented incidence matrix of a graph and the adjacency matrix of its line graph.  Given a graph $\Rho$ (pronounced ``Rho"), called the {\it root graph}, its {\it line graph} $\Gamma=\SL(\Rho)$ is the graph created by assigning a vertex in $V(\Gamma)$ to each edge in $E(\Rho)$, so $|V(\Gamma)|=|E(\Rho)|$, and calling two vertices in $\Gamma$ adjacent if and only if their corresponding edges are incident in $\Rho$.  Every connected line graph, with the exception of $K_3$, determines a unique connected root graph (up to isomorphism) \cite{W32}.

\begin{proposition} \label{line-root}
Let $\Rho$ be a graph with oriented incidence matrix $B=B(\Rho)$, and let $\Gamma = \SL(\Rho)$.  Then $BB^* = L(\Rho)$ and $B^*B \in \CH^+(\Gamma)$.  
\end{proposition}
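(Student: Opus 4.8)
The plan is to verify both claims by a single entrywise computation, relying only on two structural facts: each column of $B$ has exactly one entry equal to $+1$, one equal to $-1$, and the rest zero; and $\Rho$ is simple, so two distinct edges share at most one vertex and two distinct vertices lie on at most one common edge.

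First I would compute $BB^*$. Writing $b_{\cdot j}$ for the column of $B$ indexed by the $j$-th edge $e_j$, the $(i,k)$ entry of $BB^*$ is $\sum_j b_{ij}b_{kj}$. For $i=k$ this sum counts the edges incident to $v_i$, hence equals $\deg(v_i)$. For $i\neq k$, a term $b_{ij}b_{kj}$ is nonzero only when $e_j$ joins $v_i$ and $v_k$; by simplicity there is at most one such edge, and for it one of $b_{ij},b_{kj}$ is $+1$ and the other is $-1$, contributing $-1$. Thus $BB^* = D(\Rho) - A(\Rho) = L(\Rho)$, which is the classical identity.

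Next, for $B^*B$ I would first record the easy half: $B^*B$ is real symmetric, hence Hermitian, and $x^*B^*Bx = \|Bx\|^2 \geq 0$ for all $x$, so $B^*B$ is positive semidefinite. It remains to match its off-diagonal zero/nonzero pattern to $E(\Gamma)$. The $(j,l)$ entry of $B^*B$ is $\sum_i b_{ij}b_{il}$. For $j=l$ this is $\sum_i b_{ij}^2 = 2$, a nonzero real diagonal entry, as is permitted. For $j\neq l$, a summand is nonzero only at a vertex $v_i$ incident to both $e_j$ and $e_l$; by simplicity there is at most one such vertex. If $e_j$ and $e_l$ are not incident in $\Rho$, every summand vanishes and the entry is $0$; if they are incident, the entry equals $b_{ij}b_{il} = \pm 1 \neq 0$. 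Since, by definition of the line graph, $\{e_j,e_l\}\in E(\Gamma)$ exactly when $e_j$ and $e_l$ are incident in $\Rho$, the nonzero off-diagonal entries of $B^*B$ are precisely those indexed by edges of $\Gamma$. Hence $B^*B\in\CH^+(\Gamma)$.

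I do not expect a genuine obstacle; the only point deserving care is the twofold appeal to simplicity of $\Rho$, which is exactly what guarantees that each of the relevant sums has at most one term — so the lone $\pm 1$ contribution cannot cancel, and the off-diagonal pattern of $B^*B$ reproduces the adjacency structure of $\Gamma$ precisely rather than merely being contained in it.
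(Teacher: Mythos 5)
Your proof is correct and follows the same route as the paper: the paper simply cites the classical identity $BB^*=L(\Rho)$ and observes that the off-diagonal entries of $B^*B$ are $\pm 1$ exactly when two edges share an endpoint, which is what your entrywise computation verifies in detail. Your explicit check of positive semidefiniteness via $x^*B^*Bx=\|Bx\|^2\geq 0$ is a small but welcome addition that the paper leaves implicit.
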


\begin{proof}
As noted above, for any graph with oriented incidence matrix $B$, the Laplacian $L=BB^*$.  Since any off-diagonal entry of $B^*B$ is $1$ or $-1$ if two edges share an endpoint and $0$ otherwise, $B^*B \in \CH^+(\Gamma)$, by the definition of the line graph. (Note that for the unoriented incidence matrix $B^u(\Rho)$, $B^{u*}B^u-2I$ is the adjacency matrix of $\Gamma$.)
\end{proof}
\vspace{0.35cm}

The previous result allows us to view a given line graph $\Gamma$ of order $m$ in terms of its root graph $\Rho$ of order $n\leq m + 1$ (and for a root graph with many edges, $n$ can be significantly less than $m$) and build a frame corresponding to $\Gamma$ with rank $n-1$.

\begin{lemma} \label{samegraph}
Let $\Gamma$ be a connected line graph with $m$ vertices, and let $B = B(\Rho)$ be an $n\times m$ oriented incidence matrix of its root graph $\Rho$.  Suppose $F = X^*B$ is a frame for $\hil^{n-1}$ constructed by the Laplacian method applied to the graph $\Rho$.  Then $F^*F \in \CH^+(\Gamma)$.
\end{lemma}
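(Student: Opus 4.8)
The plan is to compute $F^*F$ directly in terms of $B$ and compare it with $B^*B$, which already lies in $\CH^+(\Gamma)$ by Proposition \ref{line-root}. Write $X = [\,x_1 \ \cdots \ x_{n-1}\,]$ for the $n\times(n-1)$ matrix whose columns are the orthonormal eigenvectors of $L(\Rho)=BB^*$ corresponding to its nonzero eigenvalues, so that $F = X^*B$. Then $XX^*$ is the orthogonal projection of $\hil^n$ onto $\mathrm{span}\{x_1,\dots,x_{n-1}\}$. Since $L(\Rho)$ is symmetric with one-dimensional kernel spanned by $x_0={\bf 1}$, this span is exactly ${\bf 1}^\perp$, so $XX^* = I_n - \tfrac{1}{n}J_n$. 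Consequently $F^*F = B^*XX^*B = B^*B - \tfrac{1}{n}(B^*{\bf 1})({\bf 1}^*B)$.

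The key observation is that ${\bf 1}^*B = 0$: in the oriented incidence matrix $B$, the column indexed by the edge $\{v_i,v_k\}$ has a single $+1$ in row $i$, a single $-1$ in row $k$, and zeros elsewhere, so every column of $B$ sums to $0$. Hence the correction term vanishes and $F^*F = B^*B$. Applying Proposition \ref{line-root} then yields $F^*F = B^*B \in \CH^+(\Gamma)$, as claimed. It is worth recording that the argument shows the Gram matrix of the Laplacian-method frame is literally equal to $B^*B$, not merely of the same zero/nonzero pattern.

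I do not anticipate a real obstacle: the content reduces to the two identities $XX^* = I_n - \tfrac{1}{n}J_n$ and ${\bf 1}^*B = 0$, the first being the spectral decomposition of the graph Laplacian (the all-ones vector spans the kernel) and the second being the defining feature of an oriented incidence matrix. The only point needing a touch of care is that the $x_i$ used in the Laplacian method are taken orthonormal and $x_0$ is excluded, so that $XX^*$ really is the projection onto ${\bf 1}^\perp$; this is precisely how $F$ was constructed, so it comes for free. Equivalently, one may phrase the whole proof as: the columns of $B$ already lie in ${\bf 1}^\perp$, and multiplying by $X^*$ is an isometry on that subspace, hence leaves the Gram matrix—and a fortiori the off-diagonal zero/nonzero pattern—unchanged.
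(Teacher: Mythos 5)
Your proof is correct and takes essentially the same route as the paper: both arguments reduce to the fact that $XX^*$ is the orthogonal projection onto ${\bf 1}^\perp$ and that the columns of the oriented incidence matrix already lie in that subspace, so the Gram matrix $F^*F$ equals $B^*B$ and Proposition \ref{line-root} finishes the job. Your version makes explicit the identity ${\bf 1}^*B=0$, which the paper leaves implicit in the computation $\langle b_i, b_j - \tfrac{1}{n}\langle b_j, x_0\rangle x_0\rangle = \langle b_i, b_j\rangle$.
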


\begin{proof}
It is easy to see that the orthogonal projection $XX^*$ of $\hil^n$ onto the eigenspace spanned by $\{x_1, x_2, \ldots, x_{n-1}\}$ satisfies $XX^* f = f - \frac{1}{n} \langle f, x_0 \rangle x_0$ for any $f\in \hil^n$ where $x_0 = {\bf 1}$.  For any columns $b_i, b_j$ of $B$,
\begin{equation*}
\begin{aligned}
\langle X^* b_i, X^* b_j \rangle = \langle b_i, XX^* b_j \rangle 
= \langle b_i, b_j - \tfrac{1}{n} \langle b_j, x_0 \rangle x_0 \rangle  
=\langle b_i, b_j \rangle.
\end{aligned}
\end{equation*}
By Proposition \ref{line-root}, $B^*B \in \CH^+(\Gamma)$, so $F^*F \in \CH^+(\Gamma)$.  
\end{proof}
\vspace{0.35cm}

The Laplacian method allows us to demonstrate the tightness of a new family of graphs, the line graphs of complete graphs.  We know that $\msr(\SL(K_n)) = n-2$ by \cite{P12}.  The Laplacian method generates a tight frame for $\hil^{n-1}$, and a modification of this frame allows us to obtain a tight frame for $\hil^{n-2}$, corresponding to $\SL(K_n)$.

\begin{theorem}\label{LKn}
For $n\geq 3$, if $\Rho = K_n$, then $\Gamma = \SL(\Rho)$ is a tight frame graph for $\hil^{n-1}$ and $\hil^{n-2}$.
\end{theorem}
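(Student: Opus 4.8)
The plan is to obtain the $\hil^{n-1}$ statement directly from the Laplacian method, and then to descend to $\hil^{n-2}$ by a single well-chosen orthogonal projection. For the first claim I would begin from $L(K_n) = nI_n - J_n$, whose eigenvalues are $0$ (simple, with eigenvector $\mathbf 1$) and $n$ with multiplicity $n-1$. Fix an oriented incidence matrix $B = B(K_n)$, so that $BB^* = L(K_n)$, and form $F = X^*B$ as in the Laplacian method, the columns of $X$ being an orthonormal basis of the $n$-eigenspace of $L(K_n)$. By Lemma~5 of \cite{BBCO17}, the spectrum of $L_0 = FF^*$ consists of the nonzero eigenvalues of $L(K_n)$, so $FF^* = nI_{n-1}$; that is, $F$ is an $n$-tight frame for $\hil^{n-1}$, with $\binom{n}{2} > n-1$ vectors when $n \geq 3$. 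By Lemma~\ref{samegraph}, $F^*F \in \CH^+(\SL(K_n))$, so $\SL(K_n)$ is a tight frame graph for $\hil^{n-1}$.

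For the second claim I would modify $F$ by projecting onto a hyperplane of $\hil^{n-1}$. Identify $\hil^{n-1}$ isometrically with $\mathbf 1^\perp \subseteq \hil^n$ via $X^*$, so that the column $f_e$ of $F$ indexed by the edge $e$ corresponds to the incidence vector $b_e \in \mathbf 1^\perp$ and $\langle f_e, f_{e'}\rangle = \langle b_e, b_{e'}\rangle$ (as in the proof of Lemma~\ref{samegraph}). Fix an orientation of $K_n$ in which every edge at vertex $n$ is oriented toward $n$, and let $W$ be the image of $\{x \in \hil^n : x_n = 0,\ \sum_k x_k = 0\}$, an $(n-2)$-dimensional subspace of $\hil^{n-1}$; equivalently, $W^\perp$ is the line spanned by $F\chi = -nX^*e_n$, where $\chi$ is the indicator vector of the star of edges at $n$. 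Let $P$ be the orthogonal projection onto $W$. Since a projection of a tight frame is a tight frame for the subspace (indeed $\sum_e (Pf_e)(Pf_e)^* = P(FF^*)P = nP$), the family $\{Pf_e\}$ is an $n$-tight frame for $W \cong \hil^{n-2}$; in particular the value $\msr(\SL(K_n)) = n-2$ is then attained by a tight frame. What remains is to check that $\{Pf_e\}$ still represents $\SL(K_n)$, i.e., that its Gram matrix $G' = F^*PF$ lies in $\CH^+(\SL(K_n))$.

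That last verification is the only real obstacle, and the choice of $W$ is what makes it work. Writing $y$ for a unit normal of $W$ in $\hil^{n-1}$, one has $\langle Pf_e, Pf_{e'}\rangle = \langle b_e, b_{e'}\rangle - \langle f_e, y\rangle\langle f_{e'}, y\rangle$, and $\langle f_e, y\rangle = 0$ unless $n \in e$ --- this is precisely why the projection direction must be supported on an intersecting family of edges (a star) rather than on an arbitrary vector. Consequently the correction term vanishes whenever $e \cap e' = \emptyset$, so non-edges of $\SL(K_n)$ stay non-edges; when exactly one of $e, e'$ contains $n$ the entry is unchanged, hence still nonzero; when both contain $n$ the new entry computes to $-\tfrac{1}{n-1} \neq 0$; and the diagonal entries at edges through $n$ become $\tfrac{n-2}{n-1}$, which is nonzero exactly because $n \geq 3$. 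Hence $G' \in \CH^+(\SL(K_n))$, and $\SL(K_n)$ is a tight frame graph for $\hil^{n-2}$. (The base case $n = 3$ is in any event immediate, since $\SL(K_3) = K_3$ is a tight frame graph for $\hil^2$ and $\hil^1$.)
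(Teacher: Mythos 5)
Your proof of the $\hil^{n-1}$ statement is exactly the paper's: apply the Laplacian method to $K_n$, use $\lambda_1=\cdots=\lambda_{n-1}=n$ to get $FF^*=nI_{n-1}$, and invoke Lemma \ref{samegraph} for the Gram pattern. For the descent to $\hil^{n-2}$ you take a genuinely different route. The paper imports the explicit matrix $M=[C\ D]$ with $C=I_{n-1}-\tfrac{1}{n-1}J_{n-1}$ and $D=B(K_{n-1})$ from \cite{AIM08}/\cite{P12}, verifies the sign pattern of $M^*M$ block by block, computes the spectrum of $MM^*=C+L(K_{n-1})$ by exhibiting common eigenvectors, and then reruns the Laplacian method on $M$. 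You instead project the already-constructed $n$-tight frame onto the hyperplane $W$ orthogonal to $X^*e_n$ (equivalently, onto $\{x: x_n=0,\ \sum_k x_k=0\}$), observe that $P(FF^*)P=nP$ keeps the frame tight on $W$, and control the Gram matrix through the rank-one correction $\langle Pf_e,Pf_{e'}\rangle=\langle b_e,b_{e'}\rangle-\langle f_e,y\rangle\langle f_{e'},y\rangle$, where $\langle f_e,y\rangle\neq 0$ only for edges $e$ through the vertex $n$. Your case analysis is correct ($0$ stays $0$ because disjoint edges cannot both contain $n$; adjacent pairs give $\pm1$ or $-\tfrac{1}{n-1}$), modulo an immaterial sign in $F\chi=\pm nX^*e_n$. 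Amusingly, if you write your projected vectors in coordinates on $W$, the edges through $n$ project to the columns of $\pm C$ and the remaining edges to the columns of $D$, so the two constructions produce essentially the same frame; what your argument buys is a cleaner verification that bypasses both the eigenvalue computation for $MM^*$ and the external minimum-rank construction, and it isolates the one genuinely delicate point, namely that the projection direction must be supported on a star of edges so that the rank-one correction never destroys a zero of the Gram matrix.
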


\begin{proof}
Let $\Rho = K_n$ and $\Gamma = \SL(\Rho)$.  Apply the Laplacian method to construct the frame $F = X^*B$ for $\hil^{n-1}$, where $B = B(\Rho)$ is an oriented incidence matrix of $\Rho$. By Lemma \ref{samegraph}, $F^*F \in \CH^+(\Gamma)$.  Since $\lambda_1 = \lambda_2 = \cdots = \lambda_{n-1} = n$ are the nonzero eigenvalues of $L = BB^*$ (see, for example, \cite{C97}),
\begin{equation*}
L_0 = FF^*
    = X^*BB^*X
    = X^*L X
    = nI_{n-1}.
\end{equation*}
Thus $F$ is a tight frame associated to $\Gamma$ in dimension $n-1$.

To show that $\Gamma = \SL(K_n)$ is in fact a tight frame graph in dimension $n-2$, we use the construction from \cite{AIM08} in the proof that the minimum rank of $\Gamma$ is $n-2$ (also employed in \cite{P12} to show $\msr(\Gamma) = n-2$).  Let $D$ denote an oriented incidence matrix of $K_{n-1}$, and let $C = I_{n-1} - \frac{1}{n-1} J_{n-1}$. Consider the $(n-1)\times \frac{n(n-1)}{2}$ matrix $M = [C \ D]$.  Easy computations show that $C^*C = C^2 = C$ and $C^*D = D$.  The matrix 
\begin{equation*}
M^*M = \left[ \begin{array}{c c} C^*C & C^*D\\ D^*C & D^*D \end{array} \right]  =  \left[ \begin{array}{c c} C & D\\ D^* & D^*D \end{array} \right]
\end{equation*}
has nonzero entries in the top left corner of the partition, corresponding to all edges between each vertex of $K_{n-1}$ and the $n$th vertex of $K_n$, which are all incident with one another; has nonzero entries in the top right (and bottom left) corner of the partition, corresponding to all edges of $K_{n-1}$ incident with the edges connecting the vertices of $K_{n-1}$ to the $n$th vertex of $K_n$; and has nonzero entries in the bottom right corner of the partition, corresponding to all edge-incidences of $K_{n-1}$ (by Proposition \ref{line-root}). That is, $M^*M \in \CH^+(\Gamma)$.

The eigenvector $x_0 = {\bf 1}$ of $L(K_{n-1})$ is also an eigenvector of $C$, corresponding to the same eigenvalue $\lambda_0 = 0$.  For $1\leq j\leq {n-2}$, set $x_j(j) = -1$, $x_j(n-1) = 1$, and $x_j(i) = 0$ for $i\neq j, n-1$; the eigenvectors $x_1, x_2, \ldots, x_{n-2}$ of $L(K_{n-1})$ are also eigenvectors of $C$, corresponding to $\lambda_j = n-1$ and $\lambda_j = 1$ for all $j$, respectively.  Since
\[ MM^* = CC^* + DD^* = C + L(K_{n-1}), \]
$\lambda = 0$ is an eigenvalue of multiplicity 1 of $MM^*$ and $\lambda = n$ is an eigenvalue of multiplicity $n-2$.  Let $\{\widetilde x_0, \widetilde x_1, \widetilde x_2, \ldots \widetilde x_{n-2}\}$ be a corresponding orthonormal set of eigenvectors of $MM^*$.  The Laplacian method now can proceed exactly as it would for a Laplacian matrix:  let $\widetilde X = [\widetilde x_1 \ \widetilde x_2 \ \cdots \ \widetilde x_{n-2}]$ and $\widetilde F = \widetilde X^*M$.  This time,
\begin{equation*}
\widetilde F \widetilde F^* = \widetilde X^*MM^*\widetilde X = nI_{n-2},
\end{equation*}
and $\widetilde F$ is a tight frame for $\hil^{n-2}$.  As in the proof of Lemma \ref{samegraph}, $\widetilde X \widetilde X^* f = f - \langle f, \widetilde x_0 \rangle \widetilde x_0$ for any $f\in \hil^{n-1}$.  If $m_j$ is a column of $M$, then it is a column of $C$ or a column of $D$.  In either case, it is easy to see that $\langle m_j, \widetilde x_0 \rangle = 0$.  Therefore, for any columns $m_i, m_j$ of $M$,
\begin{equation*}
\begin{aligned}
\langle \widetilde X^* m_i, \widetilde X^* m_j \rangle = \langle m_i, \widetilde X \widetilde X^* m_j \rangle 
= \langle m_i, m_j-\langle m_j, \widetilde x_0 \rangle \widetilde x_0 \rangle  
=\langle m_i, m_j \rangle,
\end{aligned}
\end{equation*}
and $M^*M \in \CH^+(\Gamma)$ implies $\widetilde F^*\widetilde F \in \CH^+(\Gamma)$.
\end{proof}
\vspace{0.35cm}

By Lemma 5.1 of \cite{AFM19}, if a graph on $n$ vertices is a tight frame graph for $\hil^2$, then it is a tight frame graph for $\hil^d$ for all $2\leq d \leq \lfloor \frac{n}{2} \rfloor$.  It remains an open question (see \cite{AFM19}) whether the implication holds for graphs $\Gamma$ with $\msr(\Gamma) > 2$.  In particular, since $|\SL(K_n)| = \frac{1}{2}n(n-1)$, it remains to be determined whether $\SL(K_n)$ is a tight frame graph for $\hil^d$ whenever $n \leq d \leq \lfloor \frac{1}{4}n(n-1) \rfloor$.  Although it is well known that the complete graph on $n$ vertices is a tight frame graph in all possible dimensions, we provide an algorithmic proof here that utilizes a slight variant of the Laplacian method.

\begin{theorem}\label{Sn1Kn}
For $n\geq 2$, if $\Rho = S_{n+1}$, then $\Gamma = \SL(\Rho) = K_n$ is a tight frame graph for $\hil^d$ for $1\leq d \leq n-1$.
\end{theorem}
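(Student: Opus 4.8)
The first step is to record that $\SL(S_{n+1}) = K_n$: every edge of the star $S_{n+1}$ meets the center vertex, so any two edges are incident and the $n$ vertices of $\SL(S_{n+1})$ are pairwise adjacent (this is also immediate from Proposition~\ref{line-root}). The real content is therefore that $K_n$ is a tight frame graph in $\hil^d$ for each $1\le d\le n-1$, and the plan is to follow the template of the second half of the proof of Theorem~\ref{LKn}: exhibit an explicit $(d+1)\times n$ matrix $M$ --- the analogue there of $[C\ D]$ --- such that $MM^*$ has $0$ as a simple eigenvalue and a single nonzero eigenvalue of multiplicity $d$, and $M^*M\in\CH^+(K_n)$; then the Laplacian method applied to $MM^*$ produces a tight frame for $\hil^d$ associated with $K_n$.

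For the construction, fix $d$ with $1\le d\le n-1$ and let $C_{d+1} = I_{d+1} - \tfrac{1}{d+1}J_{d+1}$, which equals $\tfrac{1}{d+1}L(K_{d+1})$, the orthogonal projection of $\hil^{d+1}$ onto $\{{\bf 1}_{d+1}\}^\perp$; its columns, viewed in $\{{\bf 1}_{d+1}\}^\perp\cong\hil^d$, form the Parseval simplex frame. Since $d+1\le n$, choose integers $n_1,\ldots,n_{d+1}\ge 1$ with $\sum_{i=1}^{d+1} n_i = n$, and let $M$ be the matrix having, for each $i$, exactly $n_i$ columns equal to $\tfrac{1}{\sqrt{n_i}}\,C_{d+1}e_i$. (One may picture $M$ as a ``folded'' incidence matrix of a weighted multistar in which the center is joined to leaf $i$ by $n_i$ parallel edges of weight $1/n_i$, so the associated line graph is still $K_n$.) Two short computations should then close the argument. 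First, using $C_{d+1}^2 = C_{d+1}$,
\[
MM^* = \sum_{i=1}^{d+1} n_i\cdot\tfrac{1}{n_i}(C_{d+1}e_i)(C_{d+1}e_i)^* = C_{d+1}\Big(\sum_{i=1}^{d+1} e_ie_i^*\Big)C_{d+1} = C_{d+1},
\]
which has eigenvalue $0$ (eigenvector ${\bf 1}_{d+1}$) and eigenvalue $1$ with multiplicity $d$. Second, again using $C_{d+1}^2 = C_{d+1}$, the off-diagonal entry of $M^*M$ between a column coming from leaf $i$ and one coming from leaf $j$ equals $\tfrac{1}{\sqrt{n_in_j}}(C_{d+1})_{ij} = \tfrac{1}{\sqrt{n_in_j}}\big(\delta_{ij}-\tfrac{1}{d+1}\big)$, which is $-\tfrac{1}{(d+1)\sqrt{n_in_j}}\neq 0$ when $i\neq j$ and $\tfrac{d}{(d+1)n_i}\neq 0$ when $i=j$ (two distinct parallel copies, using $d\ge 1$); hence every off-diagonal entry of $M^*M$ is nonzero and $M^*M\in\CH^+(K_n)$.

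To conclude, apply the Laplacian method to $M$: let $\widetilde X = [\widetilde x_1\ \cdots\ \widetilde x_d]$ be an orthonormal basis of the $1$-eigenspace of $MM^*=C_{d+1}$ --- which is exactly $\{{\bf 1}_{d+1}\}^\perp$ --- and set $\widetilde F = \widetilde X^* M$. Then $\widetilde F\widetilde F^* = \widetilde X^* C_{d+1}\widetilde X = I_d$, so $\widetilde F$ is a Parseval frame for $\hil^d$; and since each column of $M$ lies in the range of $C_{d+1}$, hence is fixed by the projection $\widetilde X\widetilde X^*$, the argument of Lemma~\ref{samegraph} gives $\widetilde F^*\widetilde F = M^*M\in\CH^+(K_n)$. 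By Lemma~\ref{tightiffpars}, $\widetilde F$ is a tight frame representing $K_n$ in $\hil^d$, completing the proof. The one point needing genuine care --- and the main new ingredient relative to a transcription of Theorem~\ref{LKn} --- is tightness when $d+1\nmid n$: taking all multiplicities equal would force $(d+1)\mid n$, so the unequal $n_i$ together with the compensating weights $1/n_i$ (arranged so that each leaf carries the same total weight) are exactly what makes $MM^*$ a scalar multiple of a projection for all $n$. The extreme case $d=n-1$ forces all $n_i=1$ and recovers the usual simplex frame, with Gram matrix $I_n-\tfrac{1}{n}J_n$.
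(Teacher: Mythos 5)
Your proof is correct, and it takes a genuinely different route from the paper's. The paper applies the Laplacian method literally to $\Rho = S_{n+1}$: it writes down the $(n+1)\times n$ oriented incidence matrix $B$, exhibits an explicit orthonormal basis $\widetilde X$ of the eigenvalue-$1$ eigenspace of $L(S_{n+1})$, and for each $d$ restricts to $d$ suitably chosen columns $\widetilde X_d$ (keeping the first), so that $F_d=\widetilde X_d^*B$ is automatically Parseval; the entire difficulty is then concentrated in showing that the off-diagonal entries of $F_d^*F_d$ do not vanish, which the paper handles by a case analysis of $\langle r_i,r_j\rangle$ over the rows of $\widetilde X_d$, including an inductive inequality to pin down the sign. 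You instead keep the target dimension fixed and change the underlying object: your $M$ consists of $n$ weighted copies of the $d+1$ columns of the projection $C_{d+1}=I_{d+1}-\tfrac{1}{d+1}J_{d+1}$ (equivalently, weighted copies of the simplex Parseval frame), and both the tightness ($MM^*=C_{d+1}$) and the nonvanishing of every off-diagonal Gram entry (each equals $-\tfrac{1}{(d+1)\sqrt{n_in_j}}$ or $\tfrac{d}{(d+1)n_i}$, nonzero since $d\geq 1$) are one-line computations; the final projection step is exactly the argument of Lemma~\ref{samegraph}. Your approach buys a uniform, essentially computation-free treatment of all $1\le d\le n-1$ and avoids the paper's delicate inequality entirely; what it gives up is the literal connection to the incidence matrix of $S_{n+1}$ that frames the theorem's statement (your $M$ is a folded, weighted multistar rather than $B(S_{n+1})$ projected to a subspace), and it yields a frame containing only $d+1$ distinct vectors, each repeated $n_i$ times --- a heavier repetition than the paper's construction, which the paper's remark after the theorem explicitly tries to minimize. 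Neither point affects validity: both proofs establish the stated result.
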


\begin{proof}
Let $\Rho=S_{n+1}$, $\Gamma=\SL(\Rho)=K_n$, and $B$ be an $(n+1) \times n$ oriented incidence matrix of $\Rho$; then $L = BB^*$ is the Laplacian matrix of $\Rho$, and $B^*B \in \CH^+(\Gamma)$, by Proposition \ref{line-root}. To illustrate,
\begin{equation*}
B= 
\bmat
-1 & -1 & \hdots & \hdots & -1 \\
1 & 0 & 0 & \hdots & 0 \\
0& 1&0&0&0\\
0&0&\ddots&0&0\\
\vdots&0&0&\ddots&0\\
0&0&0&0&1\\
\emat
\qquad
\mbox{ and } 
\qquad 
L = 
\bmat
n & -1 & -1 & \hdots & \hdots &-1\\
-1 & 1 & 0 & 0 & 0 & 0 \\
-1&0&1&0&0&0\\
\vdots&0&0&\ddots&0&0\\
\vdots&0&0&0&\ddots&0\\
-1&0&0&0&0&1\\
\emat.
\end{equation*}
Recall that $L$ has eigenvalue $\lambda_0=0$ corresponding to the eigenvector $x_0=\textbf{1}$. It is easy to see that $\lambda_n=n+1$ is an eigenvalue with eigenvector $x_n=(n,-1 ,\hdots  ,-1)^*$.  The remaining eigenvalues are $\lambda_1=\lambda_2=\hdots=\lambda_{n-1}=1$ since the columns of the $(n+1) \times (n-1)$ matrix 
\begin{equation*}
\widetilde X=
\bmat
0 & 0 & \hdots &\hdots&\hdots & 0 \\
\sqrt{\frac{n-1}{n}} & 0 & 0 & \hdots &\hdots& 0 \\
\frac{-\sqrt{\frac{n-1}{n}}}{n-1} &\sqrt{\frac{n-2}{n-1}} & \ddots &\ddots &\hdots&0\\
\frac{-\sqrt{\frac{n-1}{n}}}{n-1} &\frac{-\sqrt{\frac{n-2}{n-1}}}{n-2} &\sqrt{\frac{n-3}{n-2}} & \ddots &0&0\\
\vdots&\frac{-\sqrt{\frac{n-2}{n-1}}}{n-2} & \frac{-\sqrt{\frac{n-3}{n-2}}}{n-3} & \ddots &0& 0\\
\vdots&\vdots& \frac{-\sqrt{\frac{n-3}{n-2}}}{n-3}&\ddots &\sqrt{\frac{2}{3}} & 0 \\
\vdots&\vdots&\vdots&\ddots&\frac{-\sqrt{\frac{2}{3}}}{2}&\sqrt{\frac{1}{2}}\\
\frac{-\sqrt{\frac{n-1}{n}}}{n-1} & \frac{-\sqrt{\frac{n-2}{n-1}}}{n-2} & \frac{-\sqrt{\frac{n-3}{n-2}}}{n-3} &\hdots & \frac{-\sqrt{\frac{2}{3}}}{2}&-\sqrt{\frac{1}{2}}
\emat
\end{equation*}
form an orthonormal eigenbasis corresponding to eigenvalue $\lambda = 1$.
\newline
\indent
Let $\widetilde{X}_d$ be the matrix consisting of $d$ columns of $\widetilde X$, with any $n-1-d$ columns of $\widetilde X$ deleted except for the first. Then $F_d=\widetilde{X}^*_dB$ is the synthesis matrix of a Parseval frame for $\mathbb{H}^d$ since its frame operator is the identity:
\begin{equation*}
\begin{aligned}
F_dF_d^*  =\widetilde{X}_d^*B B^*\widetilde{X}_d
=\widetilde{X}_d^*L\widetilde{X}_d
=I_{d}.\\
\end{aligned}
\end{equation*} 

To show that the Gramian $F_d^*F_d \in \CH^+(\Gamma)$, we claim that $\langle r_i, r_j \rangle \neq 0$ for any distinct rows $r_i, r_j$ of $\widetilde X_d$ with $2\leq i, j \leq n+1$; we may assume $i<j$. Let $J\subset \{1,2,\hdots, n-1\}$ be the subset of indices for the columns of $\widetilde{X}_d$ and let $J'=J\cap \{1,\hdots,i-1\}$.  If $i-1 \notin J'$, then $i\geq 3$ and 
\begin{equation*}
\langle r_i, r_j \rangle = \sum_{k\in J'} \frac{1}{(n-k)(n-k+1)} > 0.
\end{equation*}
If $i-1 \in J'$, then, allowing the first sum in the expressions below to be empty when $i=2$, 
\begin{equation*}
\langle r_i, r_j \rangle = \sum_{\substack{k\in J'\\ k\neq i-1}} \frac{1}{(n-k)(n-k+1)} - \frac{1}{n-i+2} \leq \sum_{k=1}^{i-2} \frac{1}{(n-k)(n-k+1)} - \frac{1}{n-i+2} < 0,
\end{equation*}
where the last inequality follows from a straightforward inductive argument for $i\geq 3$.
So for any columns $f_i, f_j$ of $F_d$ with $1\leq i,j \leq n$ and $i\neq j$,

\begin{equation*}
\begin{aligned}
\langle f_i, f_j \rangle & = \langle \widetilde{X}_d^*b_i,\widetilde{X}_d^*b_j \rangle 
  =  \langle - r_1 + r_{i+1},- r_1 + r_{j+1} \rangle 
  =  \langle r_{i+1}, r_{j+1} \rangle \neq 0.
\end{aligned}
\end{equation*}
\end{proof} 

\begin{remark}
If the last $n-d-1$ columns of $\widetilde X$ were deleted in the proof of Theorem \ref{Sn1Kn}, then the resulting frame would have its last vector repeated $n-d$ times, so it is better to delete alternating columns and have fewer identical vectors in the frame.
\end{remark}

Although both line graphs in Theorems \ref{LKn} and \ref{Sn1Kn} are tight frame graphs, only one of the root graphs ($K_n$) is.  The question of whether or not a line graph is a tight frame graph is independent of whether or not its root graph is a tight frame graph.  We illustrate this with the next example.

\begin{example} \label{tightnottight}
The table below is illustrative, not comprehensive.  The reference column refers to the numbering in this paper or a citation.  Let $\Gamma = \SL(\Rho)$.

\begin{center}
\renewcommand{\arraystretch}{1.5}
\begin{tabular}{|c|ccc|ccc|}
\hline
& \multicolumn{3}{c|}{$\Gamma$ tight} & \multicolumn{3}{c|}{$\Gamma$ not tight}\\
\hline
\multirow{3}{*}{$\Rho$ tight}  & $\Rho$ & $\Gamma$ & Ref. & $\Rho$ & $\Gamma$ & Ref.\\
\cline{2-7}
&$K_n$ & $\SL(K_n)$ & \ref{Sn1Kn}, \ref{LKn} & $K_2 \Box K_3$ & \!\!$\SL(K_2 \Box K_3)$ &\ref{K_{2,n}}, \ref{indP_4}\\
&$C_3$, $C_4$&$C_3$, $C_4$ & \cite{AAC13} &&&\\
\hline
\multirow{3}{*}{$\Rho$ not tight}&$S_{n+1}$ ($n\geq 3$) & $K_n$& \ref{neighbor}, \ref{Sn1Kn} & $C_n$ ($n\geq5$)  & $C_n$& \ref{neighbor}\\
& $O_n$ ($n\geq4$) & $\SL(O_n)$& \ref{neighbor}, \ref{dup} & $P_n$ ($n\geq 4$) & $P_{n-1}$ & \ref{neighbor}\\
& $K_{2,n}$ ($n\geq3$) & \ $K_2 \Box K_n$ \ & \ref{K_{2,n}} & &&\\
\hline
\end{tabular}
\end{center}

\end{example}

\vspace{0.35cm}

The necessary condition from Proposition \ref{neighbor} for a frame graph to be tight can be converted to a necessary condition on the root graph of a tight frame line graph.  We state this precisely in the next result, used to construct a tight frame root graph with a line graph that is not a tight frame graph in Example \ref{tightnottight}.

\begin{lemma}\label{indP_4}
If $\Gamma =\SL(\Rho)$ and $\Rho$ contains an induced path on $4$ vertices, then $\Gamma$ is not a tight frame graph.
\end{lemma}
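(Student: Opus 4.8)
The plan is to invoke the contrapositive of Proposition~\ref{neighbor}: to show $\Gamma = \SL(\Rho)$ is not a tight frame graph, it suffices to produce two non-adjacent vertices of $\Gamma$ that have \emph{exactly one} common neighbor. The natural candidates come directly from the hypothesis. Write the induced path on four vertices in $\Rho$ as $a - b - c - d$, so that $ab, bc, cd \in E(\Rho)$ while $ac, ad, bd \notin E(\Rho)$ (this is exactly what ``induced'' buys us), and all four vertices $a,b,c,d$ are distinct. Consider the two vertices of $\Gamma$ corresponding to the edges $ab$ and $cd$ of $\Rho$.

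First I would check non-adjacency: by definition of the line graph, the vertices $ab$ and $cd$ of $\Gamma$ are adjacent iff the edges $ab$ and $cd$ are incident in $\Rho$, i.e.\ iff $\{a,b\} \cap \{c,d\} \neq \emptyset$; since $a,b,c,d$ are distinct, this intersection is empty, so $ab$ and $cd$ are non-adjacent in $\Gamma$. Next I would enumerate their common neighbors in $\Gamma$. A vertex $e$ of $\Gamma$ (i.e.\ an edge $e \in E(\Rho)$) is a common neighbor iff $e$ is incident in $\Rho$ to both $ab$ and $cd$; since $e$ has only two endpoints and $\{a,b\}$, $\{c,d\}$ are disjoint, this forces $e$ to have one endpoint in $\{a,b\}$ and the other in $\{c,d\}$, so $e \in \{ac, ad, bc, bd\}$. (In particular $e$ cannot be $ab$ or $cd$ themselves, since those have both endpoints inside $\{a,b\}$ or $\{c,d\}$, so we are genuinely counting neighbors.) Of the four candidates $ac, ad, bc, bd$, only $bc$ lies in $E(\Rho)$ because the path is induced. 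Hence the vertex $bc$ of $\Gamma$ is the unique common neighbor of $ab$ and $cd$.

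Putting these together, $ab$ and $cd$ are non-adjacent vertices of $\Gamma$ with exactly one common neighbor, so Proposition~\ref{neighbor} shows $\Gamma$ is not a tight frame graph. There is no real obstacle here beyond careful bookkeeping: the only place that requires attention is the use of ``induced'' to guarantee that none of $ac, ad, bd$ is an edge of $\Rho$, together with the observation that a common neighbor in $\Gamma$ corresponds precisely to a $\Rho$-edge joining the sets $\{a,b\}$ and $\{c,d\}$, of which the induced $P_4$ has exactly one.
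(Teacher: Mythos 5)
Your proposal is correct and follows essentially the same route as the paper: the paper observes that the induced $P_4$ in $\Rho$ yields an induced $P_3$ in $\Gamma$ (namely $ab$--$bc$--$cd$) lying in no $C_4$, which is exactly your statement that $ab$ and $cd$ are non-adjacent with $bc$ as their unique common neighbor, and then applies Proposition~\ref{neighbor}. Your version just makes the bookkeeping about which of $ac, ad, bc, bd$ can be edges explicit, which the paper leaves implicit.
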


\begin{proof}
If $P_4$ is an induced subgraph of $\Rho$, then $\Gamma$ has $P_3$ as an induced subgraph, such that this $P_3$ is a subgraph of no $C_4$ in $\Gamma$.  So there are two non-adjacent vertices in $V(\Gamma)$ that have a unique common neighbor and, by Proposition \ref{neighbor}, $\Gamma$ cannot be a tight frame graph.
\end{proof}
\vspace{0.35cm}

Conversely, certain easily observed features of the line graph serve as necessary conditions for the root graph to be associated with a tight frame.  In a connected graph, let {\em pendant vertex} denote a vertex of degree 1, and let {\em pendant triangle} denote a $K_3$ subgraph such that exactly two of the three vertices have degree 2. 

\begin{observation}
If $\Gamma=\SL(\Rho)$ has a pendant vertex, then $\Rho$  is not a tight frame graph.
\end{observation}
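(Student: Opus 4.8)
The plan is to reduce the claim to Proposition~\ref{neighbor}, which forbids a tight frame graph from having two non-adjacent vertices with exactly one common neighbor. It therefore suffices to show that whenever $\Gamma = \SL(\Rho)$ has a pendant vertex, the root graph $\Rho$ itself contains such a pair of vertices; all of the work lies in translating the hypothesis on $\Gamma$ into explicit local structure of $\Rho$.

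First I would interpret a pendant vertex of $\Gamma$. A vertex of $\Gamma$ is an edge $e = \{u,v\}$ of $\Rho$, and its degree in $\Gamma$ equals the number of edges of $\Rho$ incident with $e$, namely $\deg_{\Rho}(u) + \deg_{\Rho}(v) - 2$. Requiring this to equal $1$, together with the fact that $\Rho$ is simple (so each endpoint degree is at least $1$), forces $\{\deg_{\Rho}(u), \deg_{\Rho}(v)\} = \{1,2\}$. Say $\deg_{\Rho}(u) = 1$ and $\deg_{\Rho}(v) = 2$, and let $w$ be the neighbor of $v$ distinct from $u$; since $\Rho$ has no loops or multiple edges, $w \neq v$ and $w \neq u$, so $\Rho$ contains the two incident edges $\{u,v\}$ and $\{v,w\}$ with $u$ a leaf.

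The conclusion is then immediate: the unique neighbor of $u$ is $v$, so $u$ and $w$ are non-adjacent, and $v$ is their only common neighbor, since any common neighbor of $u$ and $w$ is in particular a neighbor of $u$. Applying Proposition~\ref{neighbor} to the pair $\{u,w\}$ in $\Rho$ shows that $\Rho$ is not a tight frame graph.

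I do not anticipate a real obstacle here: the statement is essentially a bookkeeping consequence of the degree identity for line graphs combined with Proposition~\ref{neighbor}. The only step worth a second look is checking that the pendant-vertex hypothesis pins down the two endpoint degrees of $e$ exactly — in particular that $w$ is a genuine third vertex — which is precisely where simplicity of $\Rho$ is used.
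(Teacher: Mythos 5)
Your proof is correct and follows essentially the same route as the paper: translate the pendant vertex of $\SL(\Rho)$ into an edge $\{u,v\}$ of $\Rho$ incident with exactly one other edge $\{v,w\}$, observe that $u$ and $w$ are non-adjacent with $v$ as their unique common neighbor, and invoke Proposition~\ref{neighbor}. Your version just makes the degree bookkeeping ($\deg_{\Rho}(u)+\deg_{\Rho}(v)-2=1$) more explicit than the paper does.
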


\begin{proof}
If there is a degree-one vertex in $\Gamma$, then there is an edge $\{u, v\} \in E(\Rho)$ that is incident with exactly one other edge $\{v, w\}$ in $\Rho$.  Then $v$ is the unique neighbor of the non-adjacent vertices $u$ and $w$, and $\Rho $ is not a tight frame graph, by Proposition \ref{neighbor}. 
\end{proof}

\begin{observation}
If $\Gamma=\SL(\Rho)$ has a pendant triangle, then $\Rho$ is not a tight frame graph.
\end{observation}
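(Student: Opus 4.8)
The plan is to exhibit inside $\Rho$ two non-adjacent vertices having a unique common neighbor; Proposition~\ref{neighbor} then shows $\Rho$ is not a tight frame graph. I use throughout the basic dictionary for $\Gamma=\SL(\Rho)$: a vertex of $\Gamma$ is an edge of $\Rho$, and if $e=\{p,q\}\in E(\Rho)$ then the edges of $\Rho$ incident with $e$ are exactly the edges at $p$ other than $e$ together with the edges at $q$ other than $e$, a disjoint union because $\Rho$ is simple; hence $\deg_\Gamma(e)=\deg_\Rho(p)+\deg_\Rho(q)-2$.

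First I would name the players: let $a,b,c$ be the vertices of a pendant triangle in $\Gamma$ with $\deg_\Gamma(a)=\deg_\Gamma(b)=2$, so that $\deg_\Gamma(c)\neq 2$ and hence $\deg_\Gamma(c)\geq 3$; let $e_a,e_b,e_c$ be the corresponding pairwise-incident edges of $\Rho$. I would then invoke the standard dichotomy for three pairwise-incident edges of a simple graph: either they all pass through a single common vertex, or they form a triangle in $\Rho$.

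The second step is to eliminate the triangle case using the pendant hypothesis. Writing $e_a=\{p,q\}$, the identity above together with $\deg_\Gamma(a)=2$ gives $\deg_\Rho(p)+\deg_\Rho(q)=4$; if $e_a,e_b,e_c$ formed a triangle on $\{p,q,r\}$ in $\Rho$, then $p,q,r$ each have degree at least $2$ in $\Rho$, forcing $\deg_\Rho(p)=\deg_\Rho(q)=2$, and then $\deg_\Gamma(b)=2$ applied to $e_b$ forces $\deg_\Rho(r)=2$ as well; but then $\deg_\Gamma(c)=\deg_\Rho(r)+\deg_\Rho(p)-2=2$, contradicting $\deg_\Gamma(c)\geq 3$. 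So $e_a,e_b,e_c$ share a common vertex $p$. Since $\deg_\Rho(p)\geq 3$ and $\deg_\Rho(p)+\deg_\Rho(q)=4$, we get $\deg_\Rho(p)=3$ and $\deg_\Rho(q)=1$; thus the three edges at $p$ are precisely $e_a=\{p,q\}$, $e_b=\{p,r\}$, $e_c=\{p,s\}$ with $q,r,s$ distinct. Applying the degree identity once more to $e_b=\{p,r\}$, whose incident edges at $p$ are exactly $e_a$ and $e_c$, and using $\deg_\Gamma(b)=2$, forces $\deg_\Rho(r)=1$. Finally, $q$ and $r$ are distinct vertices of $\Rho$ each of degree $1$ with $p$ as their only neighbor, so they are non-adjacent with unique common neighbor $p$, and Proposition~\ref{neighbor} completes the argument.

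I do not expect a serious obstacle. The one delicate point is the elimination of the ``triangle in $\Rho$'' possibility: this is exactly where the strong form of the definition of pendant triangle (two vertices of degree \emph{exactly} $2$) is needed, and it is the step I would write out in full detail; the rest is bookkeeping with the degree identity $\deg_\Gamma(e)=\deg_\Rho(p)+\deg_\Rho(q)-2$.
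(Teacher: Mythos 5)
Your proof is correct and follows essentially the same route as the paper's: both use the dichotomy that a triangle in $\SL(\Rho)$ arises from either a triangle or a claw in $\Rho$, rule out the triangle case via the pendant hypothesis, and then locate two degree-one vertices of $\Rho$ sharing the claw's center as their unique common neighbor, so that Proposition~\ref{neighbor} applies. The only difference is presentational: you make explicit the degree identity $\deg_\Gamma(e)=\deg_\Rho(p)+\deg_\Rho(q)-2$ and the resulting bookkeeping, which the paper's terser argument leaves implicit.
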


\begin{proof}
The root graph of $K_3$ may be either $K_3$ or $K_{1,3}$.  However, if $\Gamma$ has a pendant triangle, then $\Rho$ must contain a claw ($K_{1,3}$) as an induced subgraph.  The two degree-two vertices of the pendant triangle in $\Gamma$ must correspond to the two edges of the claw that are not incident to any other edges in $\Rho$.  It follows that each of these two edges has an endpoint that is a degree-one vertex; that is, $\Rho$ contains two non-adjacent vertices with a unique neighbor (the central vertex of the claw).  By Proposition \ref{neighbor}, $\Rho$ is not a tight frame graph.
\end{proof}
\vspace{0.35cm}

When the root graph has the same number of vertices as, or one more than, its corresponding line graph, that is a tight frame graph, we can give a complete characterization of the root graph.  We first cite two preliminary results.

\begin{lemma} [\cite{CGM14}, Theorem 6.3] \label{Chen_6.3} 
Let $\Gamma$ be a tight frame graph on $n$ vertices.  The graph created by duplicating one vertex in $\Gamma$ (that is, introducing a new vertex adjacent to one vertex $u$ in $\Gamma$ and to all the neighbors of $u$) is a tight frame graph.
\end{lemma}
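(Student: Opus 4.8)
The plan is to build an explicit Parseval frame for the duplicated graph by \emph{splitting} the frame vector attached to $u$ into two parallel copies. By Lemma~\ref{tightiffpars}, a tight frame graph $\Gamma$ on $n$ vertices admits a Parseval frame $\CF=\{f_i\}_{i=1}^n\subseteq\hil^d$ with $FF^*=I_d$ and $G=F^*F\in\CH^+(\Gamma)$, and I would start from such a representation. First I would arrange that $f_u\neq 0$: if $u$ has a neighbor $w$ this is automatic from $\langle f_u,f_w\rangle\neq 0$, and if $u$ is isolated I would pass to $\hil^{d+1}$, keeping the $f_i$ ($i\neq u$) in the first $d$ coordinates and setting $f_u=e_{d+1}$, which is again a Parseval frame representing $\Gamma$ but with $f_u\neq 0$.

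The naive move --- adjoining a nonzero scalar multiple $cf_u$ to $\CF$ --- does give a vector with exactly the right adjacency pattern (it is non-orthogonal to $f_i$, $i\neq u$, precisely when $f_u$ is, and non-orthogonal to $f_u$), but it breaks tightness, since the frame operator becomes $I_d+|c|^2f_uf_u^*$, a scalar matrix only when $d=1$. Instead I would \emph{split}: set $g_1=g_2=\tfrac{1}{\sqrt 2}f_u$ and $\CF'=\{f_i:i\neq u\}\cup\{g_1,g_2\}$. Since $g_1g_1^*+g_2g_2^*=f_uf_u^*$, the synthesis matrix $F'$ of $\CF'$ still satisfies $F'F'^*=FF^*=I_d$, so $\CF'$ is a Parseval frame and $G'=F'^*F'\in\CH^+(\Gamma')$ for whatever graph $\Gamma'$ it represents.

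It would then remain to check that $\Gamma'$ is $\Gamma$ with $u$ duplicated. Identifying $g_1$ with the vertex $u$ and $g_2$ with the new vertex, both being nonzero multiples of $f_u$ gives $\langle g_1,f_i\rangle\neq 0\iff\langle f_u,f_i\rangle\neq 0\iff\langle g_2,f_i\rangle\neq 0$ for all $i\neq u$, so $g_1$ and $g_2$ are each adjacent to exactly the original neighbors of $u$; also $\langle g_1,g_2\rangle=\tfrac12\|f_u\|^2\neq 0$, so $g_1\sim g_2$; and all other adjacencies are inherited from $\CF$. Hence $\CF'$ represents the duplication of $u$, and Lemma~\ref{tightiffpars} finishes the argument. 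I do not anticipate a genuine obstacle: the construction is a one-line rank-one identity, and the only slightly delicate points are confirming that parallelism yields exactly the duplication pattern (no extra or missing edges) and the harmless reduction to $f_u\neq 0$ above.
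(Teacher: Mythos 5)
Your construction is correct. Note that the paper does not supply its own proof of this lemma --- it is imported verbatim from \cite{CGM14} (Theorem 6.3) --- so there is no internal argument to compare against; what you have written is a valid self-contained replacement. The key identity $g_1g_1^*+g_2g_2^*=f_uf_u^*$ with $g_1=g_2=\tfrac{1}{\sqrt{2}}f_u$ does preserve $F'F'^*=I_d$, parallelism of $g_1,g_2$ to $f_u$ reproduces exactly the neighborhood of $u$ with no extra or missing edges, and $\langle g_1,g_2\rangle=\tfrac12\|f_u\|^2\neq 0$ supplies the edge between $u$ and its duplicate. Your reduction to $f_u\neq 0$ is harmless but, in the context of this paper, unnecessary: the standing assumption is that $\Gamma$ is connected, so every vertex of a graph on $n\geq 2$ vertices has a neighbor and its frame vector is automatically nonzero (and the $n=1$ case is trivial). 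One small remark worth making explicit if you write this up: your argument actually shows slightly more, namely that duplication does not change the dimension $d$ of the representing space, which is consistent with the fact that duplicating a vertex preserves minimum semidefinite rank.
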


\begin{lemma} [\cite{AAC13}, Proposition 4.7] \label{Ahmadi_4.7}
The graph on $n\geq4$ vertices created by deleting a single edge of $K_n$ is a tight frame graph.
\end{lemma}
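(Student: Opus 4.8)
The plan is to induct on $n$, using the vertex‑duplication operation of Lemma~\ref{Chen_6.3} to pass from $K_{n-1}-e$ to $K_n-e$. For the base case $n=4$, observe that $K_4-e$ is precisely the diamond graph, which Example~\ref{diamondframe} already exhibits as a tight frame graph (in fact a Parseval frame graph in $\R^2$, with $S=FF^*=I_2$ and $G=F^*F\in\CH^+(K_4-e)$). Note that we cannot start lower: $K_3-e=P_3$ has two non‑adjacent vertices with a unique common neighbor, so by Proposition~\ref{neighbor} it is not a tight frame graph, which is consistent with the hypothesis $n\geq 4$.

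For the inductive step, suppose $n\geq 5$ and that $K_{n-1}-e$ is a tight frame graph; label its vertices $v_1,\dots,v_{n-1}$ so that the deleted edge is $\{v_{n-2},v_{n-1}\}$. Since $n-1\geq 4$, the vertex $v_1$ is distinct from both endpoints of the missing edge, hence $v_1$ is adjacent to every other vertex of $K_{n-1}-e$. I would then duplicate $v_1$ in the sense of Lemma~\ref{Chen_6.3}: introduce a new vertex $v_n$ adjacent to $v_1$ and to all of $v_1$'s neighbors, which are $v_2,\dots,v_{n-1}$. Thus $v_n$ is joined to every vertex of $K_{n-1}-e$, so the resulting graph on $n$ vertices again has $\{v_{n-2},v_{n-1}\}$ as its unique non‑edge; that is, it is exactly $K_n-e$. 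By Lemma~\ref{Chen_6.3} this graph is a tight frame graph, completing the induction. (Equivalently, one may phrase the argument without induction by applying Lemma~\ref{Chen_6.3} a total of $n-4$ times to the diamond graph, each time duplicating a vertex that is not an endpoint of the missing edge.)

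The argument is short once the right structural observation is in hand, so the only real ``obstacle'' is recognizing that $K_n-e$ is built from the diamond by iterated vertex duplication; after that, the sole thing requiring verification is that the duplicated graph is genuinely $K_n-e$ rather than some other graph on $n$ vertices, and this reduces to the immediate fact that the duplicated vertex is adjacent to all others because $v_1$ already was (using $n-1\geq 3$). Everything else is a direct appeal to the cited results: Example~\ref{diamondframe} for the base case, Lemma~\ref{Chen_6.3} for the step, and Proposition~\ref{neighbor} to see the threshold $n\geq 4$ is sharp.
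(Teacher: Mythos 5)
Your proof is correct, but it is worth noting that the paper itself offers no proof of this statement: it is quoted as Proposition 4.7 of \cite{AAC13}, and the authors explicitly remark that the original proof there contains an error in the case $n=4$ (producing $C_4$ rather than $K_4-e$), which they patch only by pointing to the explicit Parseval frame of Example \ref{diamondframe}; they then state that they only ever use the case $n=4$. Your argument is therefore a genuinely different, self-contained route: starting from the diamond graph of Example \ref{diamondframe} and repeatedly duplicating (via Lemma \ref{Chen_6.3}) a vertex that is not an endpoint of the missing edge, you correctly obtain $K_n-e$ for every $n\geq 4$, since the duplicate of a vertex adjacent to all others is again adjacent to all others, so the unique non-edge is preserved. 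This is structurally parallel to the paper's own Corollary \ref{dup}, except that there the authors duplicate a degree-two vertex of the diamond and land on $\SL(O_n)$ instead of $K_n-e$; your choice of which vertex to duplicate is the only new observation needed, and it is verified correctly. What your approach buys is a complete proof of the lemma for all $n\geq 4$ from results already established in the paper, avoiding any reliance on the flawed external argument; what the paper's citation-plus-example buys is brevity, at the cost of covering only the single case it actually needs.
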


We only utilize the previous result in the case $n=4$.  The proof of Proposition 4.7 of \cite{AAC13} for that case contains an error, resulting in the construction of $C_4$ instead of $K_4$ minus an edge.  However, the result remains true, by Example \ref{diamondframe}.  Lemmas \ref{Chen_6.3} and \ref{Ahmadi_4.7} imply the following:

\begin{corollary}\label{dup}
Each graph in the sequence in Figure \ref{dupfig}, obtained by iteratively applying Lemma \ref{Chen_6.3} to the tight frame graph of Example \ref{diamondframe}, is a tight frame graph.
\end{corollary}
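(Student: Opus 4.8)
The plan is to argue by induction on the position of a graph within the sequence depicted in Figure \ref{dupfig}. For the base case, the first graph in the sequence is the diamond $K_4 - e$, which is a tight frame graph by Example \ref{diamondframe}; this is also the assertion of Lemma \ref{Ahmadi_4.7} with $n = 4$, a statement that, as noted above, remains correct even though the argument for that case in \cite{AAC13} is flawed. So the induction is anchored.

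For the inductive step, I would suppose that the $k$-th graph $\Gamma_k$ in the sequence is a tight frame graph. By the very way the sequence is defined, $\Gamma_{k+1}$ is obtained from $\Gamma_k$ by a single vertex duplication: there is a vertex $u$ of $\Gamma_k$ and a new vertex in $\Gamma_{k+1}$ whose neighborhood is exactly $\{u\}$ together with every neighbor of $u$ in $\Gamma_k$, and there are no other new edges. Lemma \ref{Chen_6.3} then immediately gives that $\Gamma_{k+1}$ is a tight frame graph, and iterating this from the diamond shows that every graph in the sequence is a tight frame graph.

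The only step that demands any attention — and the nearest thing to an obstacle here — is the purely combinatorial verification that each consecutive pair of graphs drawn in Figure \ref{dupfig} genuinely differs by an admissible duplication, i.e., that at each stage the newly added vertex is adjacent to some existing vertex $u$ and to precisely the neighbors of $u$, and to nothing else. Once the duplicated vertex has been identified at each step, the corollary follows at once from repeated application of Lemma \ref{Chen_6.3}, with no further frame-theoretic computation needed beyond what is already established.
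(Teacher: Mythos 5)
Your proposal is correct and matches the paper's (essentially unstated) argument exactly: the paper simply observes that Lemma \ref{Chen_6.3} applied iteratively to the diamond graph of Example \ref{diamondframe} yields the sequence, and your induction with the diamond as base case and vertex duplication as the inductive step is precisely that reasoning, spelled out. Your added remark that one must check each consecutive pair in Figure \ref{dupfig} really differs by an admissible duplication is a fair point of care, but it is immediate from the way the sequence is defined.
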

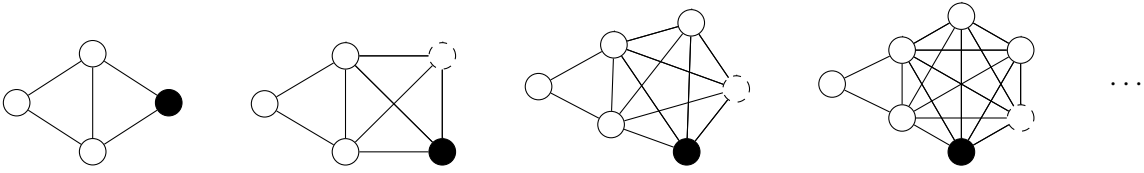
\begin{figure}[hbt]
\centering
    \begin{tikzpicture}
    \begin{scope}[every node/.style={circle,draw,inner sep=0pt,minimum size=\mynmnodesz}]
        \node (v1) at (1,1.3) {};
        \node (v2) at (0,.65) {};
        \node[fill=black] (v3) at (2,.65) {};
        \node (v4) at (1,0) {};
    \end{scope}
    \path (v1) edge node {} (v2);
    \path (v1) edge node {} (v3);
    \path (v1) edge node {} (v4);
    \path (v2) edge node {} (v4);
    \path (v3) edge node {} (v4);
    \end{tikzpicture}
    \qquad
    \begin{tikzpicture}
    \begin{scope}[every node/.style={circle,draw,inner sep=0pt,minimum size=\mynmnodesz}]
        \node (v1) at (-1.7,0) {};
        \foreach \x in {2,...,5}
        {
    \pgfmathparse{45+\x*360/4}
    \ifthenelse{\equal{\x}{3}}{\node[draw,circle,inner sep=0, fill=black] (v\x) at (\pgfmathresult:.9cm) {};}
    {\ifthenelse{\equal{\x}{4}}{\node[draw,circle,inner sep=0, style=dashed] (v\x) at (\pgfmathresult:.9cm) {};}
    {\node[draw,circle,inner sep=0] (v\x) at (\pgfmathresult:.9cm) {};}}
    }
    \end{scope}
    \path (v1) edge node {} (v5);
    \path (v1) edge node {} (v2);
    \foreach \x in {2,...,5}{
    \foreach \y in {3,...,5}{
    \path (v\x) edge node {} (v\y);
   }}
    \end{tikzpicture}
    \qquad
    \begin{tikzpicture}
    \begin{scope}[every node/.style={circle,draw,inner sep=0pt,minimum size=\mynmnodesz}]
         \node (v1) at (-1.7,0) {};
        \foreach \x in {2,...,6}
        {
    \pgfmathparse{70+\x*360/5}
    \ifthenelse{\equal{\x}{3}}{\node[draw,circle,inner sep=0, fill=black] (v\x) at (\pgfmathresult:.9cm) {};}
    {\ifthenelse{\equal{\x}{4}}{\node[draw,circle,inner sep=0, style=dashed] (v\x) at (\pgfmathresult:.9cm) {};}
    {\node[draw,circle,inner sep=0] (v\x) at (\pgfmathresult:.9cm) {};}}
    }
    \end{scope}
    \path (v1) edge node {} (v6);
    \path (v1) edge node {} (v2);
    \foreach \x in {2,...,6}{
    \foreach \y in {3,...,6}{
    \path (v\x) edge node {} (v\y);
   }}
    \end{tikzpicture}
    \qquad
    \begin{tikzpicture}
    \begin{scope}[every node/.style={circle,draw,inner sep=0pt,minimum size=\mynmnodesz}]
        \node (v1) at (-1.7,0) {};
        \foreach \x in {2,...,7}
        {
    \pgfmathparse{90+\x*360/6}
    \ifthenelse{\equal{\x}{3}}{\node[draw,circle,inner sep=0, fill=black] (v\x) at (\pgfmathresult:.9cm) {};}
    {\ifthenelse{\equal{\x}{4}}{\node[draw,circle,inner sep=0, style=dashed] (v\x) at (\pgfmathresult:.9cm) {};}
    {\node[draw,circle,inner sep=0] (v\x) at (\pgfmathresult:.9cm) {};}}
    }
    \end{scope}
    \path (v1) edge node {} (v7);
    \path (v1) edge node {} (v2);
    \foreach \x in {2,...,7}{
    \foreach \y in {3,...,7}{
    \path (v\x) edge node {} (v\y);
   }}
    \node[draw=none] (v8) at (2.2,0) {$\cdots$};
    \end{tikzpicture}
    \caption{Duplicating a vertex}
    \label{dupfig}
\end{figure}

Let $O_n$ be the graph on $n\geq3$ vertices constructed by taking the star graph $S_n$ and connecting two of its degree-one vertices with an edge (note $O_3=C_3=K_3$). Then $\SL(O_n)$ is a tight frame graph by Corollary \ref{dup}.
\begin{figure}[H] 
\centering
    \begin{tikzpicture}[rotate=180]
    \begin{scope}[every node/.style={circle,draw,inner sep=0pt,minimum size=\mynmnodesz}]
        \node (v1) at (2,.3) {};
        \node (v2) at (2,1.7){};
        \node(v3) at (1,1){};
        \node(v4) at (.2,.3){};
        \node(v5) at (0,1){};
        \node(v6) at (.2,1.7){};
    \end{scope}
    \path (v1) edge node {} (v2);
    \path (v1) edge node {} (v3);
    \path (v2) edge node {} (v3);
    \path (v3) edge node {} (v4);
    \path (v3) edge node {} (v5);
    \path (v3) edge node {} (v6);
    \end{tikzpicture}
    \qquad\qquad
    \begin{tikzpicture}
    \begin{scope}[every node/.style={circle,draw,inner sep=0pt,minimum size=\mynmnodesz}]
        \node (v1) at (-1.7,0) {};
        \foreach \x in {2,...,6}
        {
    \pgfmathparse{70+\x*360/5}
      {\node[draw,circle,inner sep=0] (v\x) at (\pgfmathresult:.9cm) {};}
    }
    \end{scope}
    \path (v1) edge node {} (v6);
    \path (v1) edge node {} (v2);
    \foreach \x in {2,...,6}{
    \foreach \y in {3,...,6}{
    \path (v\x) edge node {} (v\y);
   }}
    \end{tikzpicture}
    \caption{$O_6$ and $\SL(O_6)$}
    \label{O_6}
  \end{figure} 

\begin{proposition}\label{Pnn1V}
Suppose $\Gamma=\SL(\Rho)$ is a tight frame graph on $n\geq 3$ vertices with connected root graph $\Rho$.
\begin{enumerate}
    \item[(a)] If $|V(\Rho)| = n$, then $\Rho \in \{C_3, C_4, O_n\}$.
    \item[(b)] If $|V(\Rho)| = n+1$, then $\Rho = S_{n+1}$.
\end{enumerate}
\end{proposition}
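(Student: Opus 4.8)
The plan is to combine the necessary condition of Lemma~\ref{indP_4} with a simple edge count. Since $\Gamma=\SL(\Rho)$ has $n=|V(\Gamma)|=|E(\Rho)|$ vertices and $\Rho$ is connected, in case (a) $\Rho$ has $n$ vertices and $n$ edges, hence is unicyclic (exactly one cycle, and no chords), while in case (b) $\Rho$ has $n+1$ vertices and $n$ edges, hence is a tree. Because $\Gamma$ is assumed to be a tight frame graph, Lemma~\ref{indP_4} forces $\Rho$ to contain no induced $P_4$, so the problem reduces to classifying the connected, $P_4$-free graphs of these two types. (For a genuine characterization one also checks the converse, which is not required for the stated implication: $\SL(C_3)=C_3$ and $\SL(C_4)=C_4$ are tight frame graphs by~\cite{AAC13}, $\SL(O_n)$ is one by Corollary~\ref{dup}, and $\SL(S_{n+1})=K_n$ is one by Theorem~\ref{Sn1Kn}.)

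For part (b) the argument is short: in a tree every path is an induced path, so ``no induced $P_4$'' means ``no path on four vertices,'' i.e.\ $\Rho$ has diameter at most $2$; a tree of diameter at most $2$ has at most one non-leaf vertex, and since $|V(\Rho)|=n+1\ge 4$ it must be the star $S_{n+1}$.

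For part (a), let $g$ be the length of the unique cycle of $\Rho$. If $g\ge 5$, then four consecutive vertices of the cycle induce a $P_4$ (there are no chords, by unicyclicity), a contradiction; so $g\in\{3,4\}$. If $g=4$ and $\Rho\ne C_4$, then, as $\Rho$ is connected, some cycle vertex $u$ has an off-cycle neighbor $v$; writing $u,a,b,c$ for the vertices around the $4$-cycle (so $b$ is opposite $u$), unicyclicity forbids each of the edges $v\sim a$, $v\sim b$, $u\sim b$, since any one of them would create a second cycle, so $\{v,u,a,b\}$ induces a $P_4$, a contradiction; hence $\Rho=C_4$. If $g=3$, with triangle on $\{u,a,b\}$, two observations finish the proof. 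First, no tree attached to a cycle vertex has depth $\ge 2$: a vertex $w$ at tree-distance $2$ from a cycle vertex $u$, with intermediate vertex $v$, gives an induced $P_4$ on $\{w,v,u,a\}$, where unicyclicity rules out the potential chords $v\sim a$, $w\sim a$, $w\sim u$. So every off-cycle vertex is a pendant vertex attached to a single cycle vertex. Second, at most one cycle vertex carries a pendant vertex: a pendant $p$ at $u$ and a pendant $q$ at $a$ produce an induced $P_4$ on $\{p,u,a,q\}$. Hence $\Rho$ is a triangle together with $k\ge 0$ pendant vertices all attached to one vertex, so $\Rho=C_3$ when $k=0$ and $\Rho=O_{k+3}=O_n$ when $k\ge 1$.

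The only genuine work is the case analysis for $g=3$ and $g=4$ in part (a): the recurring point, and the thing to be careful about, is that every potential ``shortcut'' edge that would spoil the inducedness of the claimed $P_4$ is precisely an edge whose presence would create a second cycle, and is therefore excluded by unicyclicity. The edge-count reduction and part (b) are routine.
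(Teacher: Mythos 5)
Your proof is correct and follows essentially the same route as the paper: the edge count reduces (a) to unicyclic graphs and (b) to trees, and Lemma~\ref{indP_4} then forces the classification by excluding any induced $P_4$ in $\Rho$. Your case analysis for $g=3$ (bounding the depth of the attached trees and the number of cycle vertices carrying pendants) is in fact spelled out more explicitly than the paper's corresponding Case~1, but the underlying argument is the same.
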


\begin{proof}
(a) \ Suppose $\Rho \notin \{C_3,C_4,O_n \}$ has $|E(\Rho)|=n\geq 3$ and $|V(\Rho)| = n$.  Then $\Rho$ has a single, induced cycle on $k\geq3$ vertices.  We show that $\Rho$ must contain an induced path on four vertices, from which the result follows by Lemma \ref{indP_4}.  

Case 1: If $k=3$, since $\Rho \notin \{ O_n,C_3\}$, $n\geq 5$, and there must be two non-adjacent vertices adjacent to vertices on the induced cycle, such that these connecting edges are not incident with one another, or two vertices adjacent to each other, such that only one is adjacent to a vertex on the induced cycle.  In either situation, $\Rho$ contains an induced path on four vertices.

Case 2: If $k=4$, since $\Rho \neq C_4$, $n\geq 5$, and there must exist some vertex not on the induced cycle adjacent to a vertex on the cycle, from which a path of length four can be induced in $\Rho$.

Case 3: If $k\geq 5$, there exists an induced path of length four in $\Rho$ by taking the subgraph induced by any four vertices on the cycle.

(b) \ Now assume $|V(\Rho)|=n+1\geq4$.  Then $\Rho$ must be a tree. If $\Rho=P_{n+1}$, then $\Rho$ clearly contains an induced path of length 4, so assume there exists at least one vertex $u$ with degree 3 or more. Suppose $\Rho \neq S_{n+1}$.  Then $|V(\Rho)|\geq 5$, and there exists a vertex $w$ that is not adjacent to $u$ but that has a common neighbor $v$ with $u$. Now choose any other vertex adjacent to $u$, say $t$, and induce the (unique) path of length four from $t$ to $w$.  Lemma \ref{indP_4} once again concludes the argument.
\end{proof}
\vspace{0.35cm}

The minimum number of distinct eigenvalues is well-studied for complete multipartite graphs \cite{AFM19}.  We tie one such result to the line graph of certain complete bipartite graphs.

\begin{proposition} \label{K_{2,n}}
If $\Rho = K_{2,n}$ for $n\geq 3$, then $\Rho$ is not a tight frame graph and $\Gamma = \SL(\Rho) = K_2 \cp K_n$ is a tight frame graph.
\end{proposition}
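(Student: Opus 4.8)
The statement has two parts, and I would prove each separately.

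For the first part, I would verify directly that $q(K_{2,n})\ge 3$ for $n\ge 3$, which by Theorem 5.2 of \cite{AN18} is equivalent to $K_{2,n}$ not being a tight frame graph (consistent with the study of $q$ for complete multipartite graphs in \cite{AFM19}). Writing the two parts of $K_{2,n}$ as independent sets $\{u_1,u_2\}$ and $\{w_1,\dots,w_n\}$, any $M\in\CH(K_{2,n})$ has the block form
\begin{equation*}
M=\bmat D_1 & N \\ N^* & D_2 \emat,
\end{equation*}
where $D_1$ is $2\times 2$ diagonal, $D_2$ is $n\times n$ diagonal, and $N$ is the $2\times n$ block all of whose entries are nonzero. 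If $M$ had exactly two distinct eigenvalues $\lambda\ne\mu$, then $(M-\lambda I)(M-\mu I)=0$, i.e.\ $M^2=(\lambda+\mu)M-\lambda\mu I$. Comparing the lower-right $n\times n$ blocks gives $N^*N+D_2^2=(\lambda+\mu)D_2-\lambda\mu I_n$, so $N^*N$ is diagonal. But $N^*N$ is the Gram matrix of the $n$ columns of $N$, which are nonzero vectors of $\hil^2$; since $n\ge 3$ they cannot be pairwise orthogonal, a contradiction.

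For the second part, I would first record the routine identification $\SL(K_{2,n})=K_2\cp K_n$: label the edges of $K_{2,n}$ by pairs $(i,j)$ with $i\in\{1,2\}$ and $1\le j\le n$; then $(i,j)$ and $(i',j')$ are incident in $K_{2,n}$ exactly when $i=i'$ or $j=j'$, which is precisely adjacency in $K_2\cp K_n$. Now order the vertices of $K_2\cp K_n$ as the two $K_n$-copies in the same order, so that a matrix $M\in\CH(K_2\cp K_n)$ has the form
\begin{equation*}
M=\bmat P & cI_n \\ cI_n & Q \emat,
\end{equation*}
where $P,Q\in\CH(K_n)$ and the off-diagonal blocks are diagonal (since $(1,j)$ is adjacent to $(2,k)$ iff $j=k$), here taken equal to $cI_n$ with $c\ne 0$. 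I would then choose $P=I_n+\tfrac1n J_n\in\CH^+(K_n)$, whose eigenvalues are $1$ (multiplicity $n-1$) and $2$ (simple); put $\mu=3$, $c=\sqrt 2$, and $Q=\mu I_n-P=2I_n-\tfrac1n J_n$. Since the off-diagonal entries of $Q$ are the negatives of those of $P$, we have $Q\in\CH(K_n)$, hence $M\in\CH(K_2\cp K_n)$.

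Finally I would check that $M$ is positive semidefinite with exactly two distinct eigenvalues. Let $v_1,\dots,v_n$ be an orthonormal eigenbasis for $P$ with $Pv_i=p_i v_i$, each $p_i\in\{1,2\}$; since $Q=\mu I_n-P$ we get $Qv_i=(\mu-p_i)v_i$, so $M$ leaves each plane $\mathrm{span}\{(v_i,0),(0,v_i)\}$ invariant and acts on it by the Hermitian matrix $\bmat p_i & c \\ c & \mu-p_i \emat$, which has trace $\mu=3$ and determinant $p_i(\mu-p_i)-c^2$. Because $1$ and $2$ are exactly the roots of $t^2-3t+2$, this determinant vanishes, so each plane contributes the eigenvalues $0$ and $3$; hence $M$ has eigenvalues $0$ and $3$ each with multiplicity $n$, so $M\in\CH^+(K_2\cp K_n)$ and $q(K_2\cp K_n)=2$, and $K_2\cp K_n$ is a tight frame graph by Theorem 5.2 of \cite{AN18}. (Diagonalizing $M=UDU^*$ with $D=3I_n\oplus 0_n$ and setting $F=\sqrt3\,\widetilde U^*$, where $\widetilde U$ consists of the first $n$ columns of $U$, produces an explicit $3$-tight frame for $\hil^n$ representing the graph.) I expect the only genuinely creative step to be the choice of ansatz: once one sees that adjacency forces the off-diagonal block to be diagonal and that taking it scalar pins down $P$ to have at most two eigenvalues (roots of a quadratic in $c$), using a $q(K_n)=2$ building block such as $I_n+\tfrac1n J_n$ is natural, and the rest is verification.
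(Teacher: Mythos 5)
Your proposal is correct, and it takes a genuinely different (and much more self-contained) route than the paper, whose entire proof consists of citing Corollaries 6.5 and 6.8 of \cite{AAC13} and exhibiting the synthesis matrix $F=\left[\, J_n-I_n \ \ J_n-(n-1)I_n \,\right]$ as a direct demonstration of the second claim. For the first claim, your argument replaces the citation with a short computation: partitioning $M\in\CH(K_{2,n})$ and reading off the lower-right block of $(M-\lambda I)(M-\mu I)=0$ forces $N^*N$ to be diagonal, i.e.\ $n\ge 3$ pairwise orthogonal nonzero vectors in $\hil^2$, which is impossible; this is sound, and some such argument really is needed since Proposition \ref{neighbor} is silent here (every pair of non-adjacent vertices of $K_{2,n}$ has two or more common neighbors). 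For the second claim, your construction and the paper's are close cousins: the Gram matrix of the paper's frame is $\left[\begin{smallmatrix}(n-2)J_n+I_n & (n-1)I_n\\ (n-1)I_n & (2-n)J_n+(n-1)^2 I_n\end{smallmatrix}\right]$, which fits exactly the ansatz you describe (diagonal blocks of the form $aJ_n+bI_n$ in $\CH(K_n)$ summing to a scalar matrix, off-diagonal block a nonzero scalar matrix), just normalized differently. You build the rank-$n$ two-eigenvalue Gramian directly and verify its spectrum via the $2\times 2$ invariant blocks, whereas the paper writes down a square root and leaves the verification implicit. Your version buys independence from \cite{AAC13} and makes the mechanism visible; the paper's buys brevity and an explicit frame.
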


\begin{proof}
Corollary 6.5 of \cite{AAC13} is the first statement.  Corollary 6.8 of \cite{AAC13} implies the second statement.  As a direct demonstration, we note that
\[ F = \left[ J_n - I_n \quad J_n - (n-1)I_n \right] \]
is a tight frame associated with $\Gamma$.
\end{proof}
\vspace{0.35cm}

In general, $K_{m,n}$ is a tight frame graph if and only if $m=n$ (\cite{AAC13}, Corollary 6.5).  For $\SL(K_{m,n}) = K_m \cp K_n$, it is known (\cite{BHP18}, Proposition 3.1) that $q(\SL(K_{m,n})) \leq 3$.  To the best of the authors' knowledge, however, it remains open whether or not the Cartesian product of two complete graphs is a tight frame graph.  Note that every two non-adjacent vertices in $\SL(K_{m,n})$ have exactly two common neighbors \cite{H64}, so $\SL(K_{m,n})$ satisfies the necessary condition in Proposition \ref{neighbor} for being a tight frame graph.

We conclude by noting the limitations of the line graph approach.  Line graphs are characterized as those graphs that do not contain any of nine forbidden graphs as induced subgraphs \cite{B70}.  For reference, we list these graphs in Figure \ref{forbiddensubgraphs}, using Beineke's numbering, G1 through G9. 

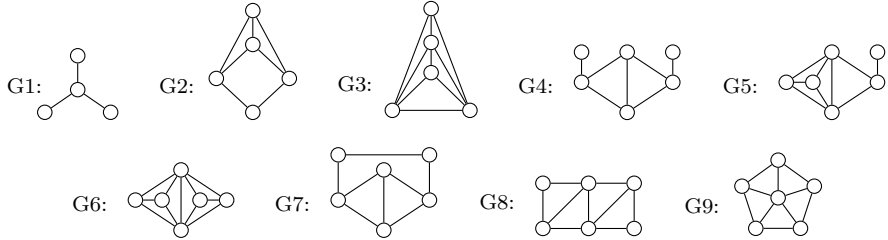
\begin{figure}[ht]
    \centering
        
        \begin{tikzpicture}
        \begin{scope}[every node/.style={circle,draw,inner sep=0pt, minimum size=\mysmnodesz} ]
            \node (v1) at (0,.05) {};
            \foreach \x in {2,...,4}{
                \pgfmathparse{90+\x*360/3}
                \node[draw,circle,inner sep=0] (v\x) at (\pgfmathresult:.5cm) {};} 
            \node[draw=none] (v5) at (-.7,.1) {\scriptsize G1:};
        \end{scope}
            \path (v1) edge node {} (v2);
            \path (v1) edge node {} (v3);
            \path (v1) edge node {} (v4);
        \end{tikzpicture}
        \quad
        \begin{tikzpicture}
        \begin{scope}[every node/.style={circle,draw,inner sep=0pt, minimum size=\mysmnodesz}]
        \node (v1) at (0,.45) {};
        \node (v2) at (0,.9) {};
        \node (v3) at (.48,0) {};
        \node (v4) at (-.48,0) {};
        \node (v5) at (0, -.45) {};
        \node[draw=none] (v6) at (-1,-.1) {\scriptsize G2:   };
        \end{scope}
        \path (v1) edge node {} (v2);
        \path (v1) edge node {} (v3);
        \path (v1) edge node {} (v4);
        \path (v2) edge node {} (v3);
        \path (v2) edge node {} (v4);
        \path (v5) edge node {} (v3);
        \path (v5) edge node {} (v4);
        \end{tikzpicture}
        \quad
        \begin{tikzpicture}
        \begin{scope}[every node/.style={circle,draw,inner sep=0pt, minimum size=\mysmnodesz}]
            \node (v1) at (-.51,0) {};
            \node (v2) at (.51,0) {};
            \node (v3) at (0,.5) {};
            \node (v4) at (0,.9) {};
            \node (v5) at (0,1.35){};
            \node[draw=none] (v6) at (-1,.35){\scriptsize G3:   };
        \end{scope}
            \path (v1) edge node {} (v2);
            \path (v1) edge node {} (v3);
            \path (v2) edge node {} (v3);
            \path (v4) edge node {} (v2);
            \path (v4) edge node {} (v3);
            \path (v4) edge node {} (v1);
            \path (v5) edge node {} (v2);
            \path (v5) edge node {} (v1);
            \path (v5) edge node {} (v4);
        \end{tikzpicture}
        \quad
        \begin{tikzpicture}
        \begin{scope}[every node/.style={circle,draw,inner sep=0pt, minimum size=\mysmnodesz}]
            \node (v1) at (-.6,0) {};
            \node (v2) at (.6,0) {};
            \node (v3) at (0,.4) {};
            \node (v4) at (0,-.4) {};
            \node (v5) at (-.6,.4){};
            \node (v6) at (.6,.4){};
            \node[draw=none] (v7) at (-1.2,-.05) {\scriptsize G4:   };
        \end{scope}
            \path (v1) edge node {} (v3);
            \path (v1) edge node {} (v4);
            \path (v1) edge node {} (v5);
            \path (v3) edge node {} (v2);
            \path (v4) edge node {} (v2);
            \path (v6) edge node {} (v2);
            \path (v3) edge node {} (v4);
        \end{tikzpicture}
        \quad
        \begin{tikzpicture}
        \begin{scope}[every node/.style={circle,draw,inner sep=0pt, minimum size=\mysmnodesz}]
            \node (v1) at (-.6,0) {};
            \node (v2) at (.6,0) {};
            \node (v3) at (0,.4) {};
            \node (v4) at (0,-.4) {};
            \node (v5) at (-.25,0){};
            \node (v6) at (.6,.4){};
            \node[draw=none] (v7) at (-1.2,-.05) {\scriptsize G5:   };
        \end{scope}
            \path (v1) edge node {} (v3);
            \path (v1) edge node {} (v4);
            \path (v1) edge node {} (v5);
            \path (v3) edge node {} (v5);
            \path (v4) edge node {} (v5);
            \path (v3) edge node {} (v2);
            \path (v4) edge node {} (v2);
            \path (v6) edge node {} (v2);
            \path (v3) edge node {} (v4);
        \end{tikzpicture}
        \\
        \vspace{3mm}
        \begin{tikzpicture}
        \begin{scope}[every node/.style={circle,draw,inner sep=0pt, minimum size=\mysmnodesz}]
            \node (v1) at (-.6,0) {};
            \node (v2) at (.6,0) {};
            \node (v3) at (0,.4) {};
            \node (v4) at (0,-.4) {};
            \node (v5) at (-.25,0){};
            \node (v6) at (.25,0){};
            \node[draw=none] (v7) at (-1.2,-.05) {\scriptsize G6:   };
        \end{scope}
            \path (v1) edge node {} (v3);
            \path (v1) edge node {} (v4);
            \path (v1) edge node {} (v5);
            \path (v3) edge node {} (v5);
            \path (v4) edge node {} (v5);
            \path (v3) edge node {} (v2);
            \path (v4) edge node {} (v2);
            \path (v6) edge node {} (v2);
            \path (v6) edge node {} (v3);
            \path (v6) edge node {} (v4);
            \path (v3) edge node {} (v4);
        \end{tikzpicture}
         \quad
        \begin{tikzpicture}
        \begin{scope}[every node/.style={circle,draw,inner sep=0pt, minimum size=\mysmnodesz}]
            \node (v1) at (-.6,0) {};
            \node (v2) at (.6,0) {};
            \node (v3) at (0,.4) {};
            \node (v4) at (0,-.4) {};
            \node (v5) at (-.6,.6){};
            \node (v6) at (.6,.6){};
            \node[draw=none] (v7) at (-1.2,-.05) {\scriptsize G7:   };
        \end{scope}
            \path (v1) edge node {} (v3);
            \path (v1) edge node {} (v4);
            \path (v1) edge node {} (v5);
            \path (v3) edge node {} (v2);
            \path (v4) edge node {} (v2);
            \path (v6) edge node {} (v2);
            \path (v3) edge node {} (v4);
            \path (v5) edge node {} (v6);
        \end{tikzpicture}
        \quad
        \begin{tikzpicture}
        \begin{scope}[every node/.style={circle,draw,inner sep=0pt, minimum size=\mysmnodesz}]
            \node (v1) at (-.6,0) {};
            \node (v2) at (0,0) {};
            \node (v3) at (.6,0) {};
            \node (v4) at (-.6,.6) {};
            \node (v5) at (0,.6){};
            \node (v6) at (.6,.6){};
            \node[draw=none] (v7) at (-1.2,.35) {\scriptsize G8:   };
        \end{scope}
            \path (v1) edge node {} (v2);
            \path (v2) edge node {} (v3);
            \path (v1) edge node {} (v4);
            \path (v1) edge node {} (v5);
            \path (v2) edge node {} (v5);
            \path (v2) edge node {} (v6);
            \path (v3) edge node {} (v6);
            \path (v4) edge node {} (v5);
            \path (v5) edge node {} (v6);
        \end{tikzpicture}
         \quad
        \begin{tikzpicture}
        \begin{scope}[every node/.style={circle,draw,inner sep=0pt, minimum size=\mysmnodesz}]
            \node (v1) at (0,0) {};
            \foreach \x in {2,...,6}{
                \pgfmathparse{90+\x*360/5}
                \node[draw,circle,inner sep=0] (v\x) at (\pgfmathresult:.5cm) {};} 
            \node[draw=none] (v7) at (-1,-.1) {\scriptsize G9:   };
        \end{scope}
           \path (v1) edge node {} (v2);
           \path (v1) edge node {} (v3);
           \path (v1) edge node {} (v4);
           \path (v1) edge node {} (v5);
           \path (v1) edge node {} (v6);
           \path (v2) edge node {} (v3);
           \path (v3) edge node {} (v4);
           \path (v4) edge node {} (v5);
           \path (v5) edge node {} (v6);
           \path (v6) edge node {} (v2);
        \end{tikzpicture}
    \caption{The nine forbidden subgraphs of \cite{B70}}
    \label{forbiddensubgraphs}
\end{figure}

Families known to be tight frame graphs are often not line graphs.  For example, recall that if $\Gamma$ is obtained from a complete graph ($n>3$) by deleting a single edge, then $\Gamma$ is a tight frame graph.  For $n=4$, $K_n - e$ is a line graph; however, for $n>4$, $K_n - e$ contains G3 as an induced subgraph and is therefore not a line graph.  The hypercube $Q_n$, defined recursively as $Q_1 = K_2$ and $Q_n = Q_{n-1} \cp K_2$, is another example:  $Q_n$ is a tight frame graph for each $n\geq 1$ (\cite{AAC13}, Corollary 6.9), but for $n\geq 3$, $Q_n$ contains the claw G1 as an induced subgraph and is therefore not a line graph.  

The {\em join} of two graphs $\Gamma$ and $\Lambda$, denoted $\Gamma \vee \Lambda$, is the graph with vertex set $V(\Gamma) \cup V(\Lambda)$ and edge set $E(\Gamma) \cup E(\Lambda) \cup \{ \{u,v\}: u\in V(\Gamma), \ v\in V(\Lambda) \}$.  For two connected graphs $\Gamma$ and $\Lambda$ on $n$ vertices, $\Gamma \vee \Lambda$ is a tight frame graph (\cite{MS16}, Theorem 5.2).  Most members of this large set of examples are not line graphs.

\begin{proposition}
If $\Gamma$ and $\Lambda$ are two connected graphs on $n\geq 3$ vertices, not both the complete graph, then $\Gamma \vee \Lambda$ is not a line graph.
\end{proposition}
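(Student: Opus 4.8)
The plan is to find one of Beineke's nine forbidden induced subgraphs inside $\Gamma \vee \Lambda$ and then invoke \cite{B70}. The natural target is $G_3$ from Figure \ref{forbiddensubgraphs}, which is precisely $K_5$ with a single edge deleted (note $G_3$ has $9 = \binom{5}{2}-1$ edges). The guiding principle is that a join is ``maximally dense'' across its two parts: every pair of vertices with one endpoint in $V(\Gamma)$ and the other in $V(\Lambda)$ is an edge, so to produce an induced $K_5 - e$ it suffices to locate a single non-edge inside one of the two factors, surrounded by enough vertices to fill out the rest.

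First I would use the symmetry of $\vee$ together with the hypothesis that $\Gamma$ and $\Lambda$ are not both complete to assume, without loss of generality, that $\Gamma \neq K_n$. Next I would recall the standard fact that a connected graph that is not complete contains an induced $P_3$: pick two non-adjacent vertices, take a shortest path between them, and the first three vertices of that path induce a path. Applied to $\Gamma$, this yields vertices $a, b, c \in V(\Gamma)$ with $\{a,b\}, \{b,c\} \in E(\Gamma)$ but $\{a,c\} \notin E(\Gamma)$. Since $\Lambda$ is connected on $n \geq 3 \geq 2$ vertices, it contains an edge $\{p,q\} \in E(\Lambda)$.

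Then I would examine the subgraph of $\Gamma \vee \Lambda$ induced by $\{a,b,c,p,q\}$. Because $V(\Gamma)$ and $V(\Lambda)$ are disjoint in the join, these five vertices are distinct; moreover $\{a,b\}, \{b,c\}$ are edges of $\Gamma$, $\{p,q\}$ is an edge of $\Lambda$, and all six pairs joining $\{a,b,c\}$ to $\{p,q\}$ are edges of $\Gamma \vee \Lambda$ by the definition of the join. Hence the only non-adjacency among the five vertices is $\{a,c\}$, so the induced subgraph is $K_5 - \{a,c\} \cong G_3$. By Beineke's characterization \cite{B70}, $\Gamma \vee \Lambda$ is therefore not a line graph.

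I do not expect a serious obstacle here; the proof is genuinely short once one fixes on $G_3 = K_5 - e$ as the forbidden subgraph to hunt for. The only points that require a moment's care are confirming from Figure \ref{forbiddensubgraphs} that $G_3$ really is $K_5$ minus one edge, and checking that the five vertices $a, b, c, p, q$ are pairwise distinct so that the induced subgraph has exactly five vertices — both of which are immediate. (As a sanity check, the argument correctly needs the hypothesis ``not both complete'': when $\Gamma = \Lambda = K_n$ the join is $K_{2n} = \SL(K_{1,2n})$, a line graph, and there is no induced $P_3$ to start from.)
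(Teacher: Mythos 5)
Your proof is correct, and it is a genuinely leaner route than the paper's. The paper fixes an arbitrary triple $\{u,v,w\}$ in the non-complete factor and runs an exhaustive case analysis on its induced subgraph (three isolated vertices, $P_2$ plus an isolated vertex, or $P_3$), pairing each case with a sub-case analysis on a triple from the other factor, and lands on G1, G2, or G3 accordingly; the final step observes that not every triple can induce $C_3$ since $\Gamma \neq K_n$. You instead exploit connectivity to \emph{choose} a good configuration up front: a connected non-complete graph always contains an induced $P_3$ (shortest path between two non-adjacent vertices), and then a single edge of $\Lambda$ suffices to complete an induced $K_5 - e = \mathrm{G}3$ across the join. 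This collapses the paper's three cases (and their sub-cases) into one, and only ever needs one forbidden subgraph rather than three; the trade-off is that your argument leans on connectivity of $\Gamma$ in an essential way (to produce the induced $P_3$), whereas the paper's Cases 1 and 2 would survive even for a disconnected non-complete factor. Your identification of G3 as $K_5$ minus an edge matches the figure (the paper itself uses this when noting $K_n - e$ contains G3 for $n > 4$), and your sanity check that $K_n \vee K_n = K_{2n}$ is a line graph correctly explains why the ``not both complete'' hypothesis is needed.
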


\begin{proof}
Suppose $\Gamma \neq K_n$ and consider three vertices $u, v, w \in \Gamma$ and three vertices $a,b,c \in \Lambda$.

Case 1:  Assume the subgraph induced by $\{u,v,w\}$ is three isolated vertices.  Then $\{u,v,w,a\}$ induces the claw graph G1 as a subgraph in $\Gamma \vee \Lambda$.

Case 2:  Assume the subgraph induced by $\{u,v,w\}$ is the disjoint union of $P_2$  and an isolated vertex, where $w$ is the isolated vertex.  Due to the commutativity of the join operation, we need to consider only the cases when the subgraph induced by $\{a,b,c\}$ is the disjoint union of $P_2$ and an isolated vertex ($c$ being the isolated vertex), $P_3$, or $C_3$.  In the first case, $\{u,v,w,b,c\}$ induces G2, in the second case, $\{u,v,a,b,c\}$ induces G3, and in the third case, $\{v, w, a,b,c\}$ induces G3 in $\Gamma \vee \Lambda$.

Case 3:  Assume the subgraph induced by $\{u,v,w\}$ is $P_3$.  Again, the commutativity of join allows us to consider only the cases when the subgraph induced by $\{a,b,c\}$ is $P_3$ (with $b$ as the degree-two vertex) or $C_3$. In both cases, $\Gamma \vee \Lambda$ contains G3 as an induced subgraph, induced by $\{u,v,w,a,b\}$.

If every three vertices of $\Gamma$ induce a copy of $C_3$, then $\Gamma$ must be the complete graph on $n$ vertices, which contradicts our assumption.  Cases 1 through 3 are therefore exhaustive and always contain one of the nine forbidden induced subgraphs; hence, $\Gamma \vee \Lambda$ cannot be a line graph.
\end{proof}
\vspace{0.35cm}

Finally, we note that of the nine graphs listed in Figure \ref{forbiddensubgraphs}, G2, G3, and G6 are tight frame graphs 
(apply Lemma \ref{Chen_6.3} once to the tight frame graph $C_4$ and to the diamond graph of Example \ref{diamondframe} to obtain G2 and G3, respectively, and apply twice to the diamond graph to obtain G6).  Graphs G1, G4, G5, G7, and G8 fail to be tight frame graphs by Proposition \ref{neighbor}.  The wheel graph G9 is not a tight frame graph over symmetric real matrices (\cite{CGM14}, Example 6.13), and, to the best of the authors' knowledge, whether it is a tight frame graph over complex matrices remains an open question.  The occurrence of these nine forbidden graphs as induced subgraphs of tight frame graphs is, of course, guaranteed by Corollary \ref{min}.

%%%%%%%%%%%%%%%%%%%%%%%%%%%%%%%%%%%%%%%%%%%
\section*{Acknowledgments}
We thank Erich McAlister for his careful reading and helpful feedback, as well as the anonymous referees.

%%%%%%%%bibiliography
{\footnotesize
}

\end{document}